\titleformat*{\section}{\bf\Large\center}
\def\T{{ \mathrm{\scriptscriptstyle T} }}
\newtheorem{definition}{Definition}
\newtheorem{lemma}{Lemma}
\newtheorem{theorem}{Theorem}
\newtheorem{example}{Example}
\newtheorem{proposition}{Proposition}
\apptocmd{\sloppy}{\hbadness 10000\relax}{}{} 
\newcites{sec}{References}
\begin{document}
\onehalfspacing

\title{\bf  
A stronger Sylvester’s criterion for positive semidefinite matrices
} 
\author{Mingrui Zhang and Peng Ding
\footnote{Mingrui Zhang, Division of Biostatistics, University of California, Berkeley, CA 94720 U.S.A. (E-mail: mingrui\_zhang@berkeley.edu). Peng Ding, Department of Statistics, University of California, Berkeley, CA 94720 U.S.A. (E-mail: pengdingpku@berkeley.edu). Ding was partially funded by the U.S. National Science Foundation (grant \# 1945136). 
}
}
\date{}
 
\maketitle

\begin{abstract}
Sylvester's criterion characterizes positive definite (PD) and positive semidefinite (PSD) matrices without the need of eigendecomposition. It states that a symmetric matrix is PD if and only if all of its leading principal minors are positive, and a symmetric matrix is PSD if and only if all of its principal minors are nonnegative. For an $m\times m$ symmetric matrix, Sylvester's criterion requires computing $m$ and $2^m-1$ determinants to verify it is PD and PSD, respectively. Therefore, it is less useful for PSD matrices due to the exponential growth in the number of principal submatrices as the matrix dimension increases. We provide a stronger Sylvester's criterion for PSD matrices which only requires to verify the nonnegativity of $m(m+1)/2$ determinants. Based on the new criterion, we provide a method to derive elementwise criteria for PD and PSD matrices. We illustrate the applications of our results in PD or PSD matrix completion and highlight their statistics applications via nonlinear semidefinite program. 
\end{abstract}

\medskip 
\noindent 
{\bf Keywords}: matrix completion; optimization; positive definite; positive semidefinite

\newpage

\section{Classic Sylvester's criterion and our new result}\label{sec1}

\subsection{Review of Sylvester's criterion}

Positive definite (PD) and positive semidefinite (PSD) matrices are important in mathematics and related fields. In this paper, we only discuss real matrices. For an $m\times m$ symmetric matrix $X$, it is PD is $c^{\T} X c >0$ for all non-zero $m$-dimensional vector $c$, and it is PSD if  $c^{\T} X c \geq 0$ for all $m$-dimensional vector $c$. There are different characterizations of PD and PSD matrices. The most common characterization is based on the eigenvalues, which requires the eigendecomposition of matrices. That is, a symmetric matrix is PD if all its eigenvalues are positive, and PSD if all its eigenvalues are nonnegative. Without the need of eigendecomposition, Sylvester's criterion determines whether or not a matrix is PD or PSD based on the determinants of its submatrices. Before reviewing Sylvester's criterion, we introduce some notation and definitions. 

For integers $a\leq b$, we use $a:b$ to denote the set of integers $\{a,a+1,...,b-1,b\}$. For any $m\times n$ matrix $X$, if $I$ is a sequence of unique values in $1:m$ and $J$ is a sequence of unique values in $1:n$, then we use $X_{I,J}$ to denote the submatrix of $X$ having the rows with indices in $I$ and the columns with indices in $J$. We use $\mathrm{det}(\cdot)$ to denote the determinant of a matrix. We review the definition of principal submatrix, principal minor, leading principal submatrix, and leading principal minors in Definition \ref{def::principal} below.

\begin{definition}\label{def::principal}
Consider an $m\times m$ matrix $X$. If $I$ is a subset of $\{1,...,m\}$, then $X_{I,I}$ is a principal submatrix of $X$ and $\mathrm{det}(X_{I,I})$ is a principal minor of $X$. If $b\geq 1$ is an integer, then $X_{1:b,1:b}$ is a leading principal submatrix of $X$ and $\mathrm{det}(X_{1:b,1:b})$ is a leading principal minor of $X$. 
\end{definition}

Based on Definition \ref{def::principal}, Sylvester's criterion gives sufficient and necessary conditions for PD and PSD matrices \citep{horn2012matrix}, which is reviewed in Theorem \ref{thm::syl} below. 

\begin{theorem}[Sylvester's criterion]\label{thm::syl}
(i) A symmetric matrix is PD if and only if all of its leading principal minors are positive. (ii) A symmetric matrix is PSD if and only if all of its principal minors are nonnegative.
\end{theorem}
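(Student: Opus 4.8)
The plan is to prove necessity and sufficiency separately in each part, with essentially all the work in the two sufficiency directions. Necessity is immediate: if $X$ is PD (resp.\ PSD), then for any index set $S\subseteq\{1,\dots,m\}$ and any $c\in\mathbb{R}^{|S|}$, padding $c$ with zeros outside the coordinates indexed by $S$ and applying the defining inequality shows the principal submatrix $X_{S,S}$ is PD (resp.\ PSD); since a symmetric PD (resp.\ PSD) matrix has only positive (resp.\ nonnegative) eigenvalues and its determinant equals their product, every principal minor of $X$ is positive (resp.\ nonnegative). This gives the ``only if'' halves of both (i) and (ii).

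For the ``if'' half of (i) I would induct on $m$, the case $m=1$ being trivial. Partition $X=\begin{pmatrix} A & b \\ b^{\T} & c\end{pmatrix}$ with $A=X_{1:m-1,\,1:m-1}$, $b\in\mathbb{R}^{m-1}$, and $c\in\mathbb{R}$. The leading principal minors of $A$ are among those of $X$, hence positive, so by the inductive hypothesis $A$ is PD and in particular invertible. Then
\[
X=\begin{pmatrix} I & 0 \\ b^{\T}A^{-1} & 1\end{pmatrix}
\begin{pmatrix} A & 0 \\ 0 & c-b^{\T}A^{-1}b\end{pmatrix}
\begin{pmatrix} I & A^{-1}b \\ 0 & 1\end{pmatrix}
\]
is a factorization $X=LDL^{\T}$ with $L$ unit lower triangular, hence invertible; writing $c^{\T}Xc=(L^{\T}c)^{\T}D(L^{\T}c)$ and noting that $L^{\T}c$ ranges over all of $\mathbb{R}^m$, we see that $X$ is PD if and only if $D$ is, which, since $A$ is already PD, holds if and only if the Schur complement $s:=c-b^{\T}A^{-1}b$ is positive. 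Taking determinants gives $\det(X)=\det(A)\,s$; since $\det(X)>0$ and $\det(A)>0$ we get $s>0$, closing the induction.

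For the ``if'' half of (ii) I would reduce to (i) by perturbation. Suppose every principal minor of the symmetric matrix $X$ is nonnegative and fix $\epsilon>0$. For each $b\in\{1,\dots,m\}$, expanding $\det(X_{1:b,1:b}+\epsilon I_b)$ as a polynomial in $\epsilon$ gives $\sum_{k=0}^{b}e_k\,\epsilon^{\,b-k}$, where $e_0=1$ and, for $k\geq 1$, $e_k$ is the sum of the $k\times k$ principal minors of $X_{1:b,1:b}$ --- each of which is a principal minor of $X$ and hence nonnegative --- so $\det(X_{1:b,1:b}+\epsilon I_b)\geq\epsilon^b>0$. Thus every leading principal minor of $X+\epsilon I$ is positive, part (i) makes $X+\epsilon I$ PD, and letting $\epsilon\downarrow 0$ yields $c^{\T}Xc=\lim_{\epsilon\downarrow 0}c^{\T}(X+\epsilon I)c\geq 0$ for all $c\in\mathbb{R}^m$, i.e.\ $X$ is PSD. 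The main obstacle is the sufficiency direction of (i) --- converting sign conditions on determinants into a statement about the quadratic form --- and the $LDL^{\T}$ identity is precisely the device that bridges it; once it is in hand, part (ii) and the remaining bookkeeping are routine.
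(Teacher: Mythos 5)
Your proof is correct, but note that the paper does not actually prove Theorem \ref{thm::syl} at all: it is stated as a review of known results, with part (i) attributed to Sylvester via \citet{horn2012matrix} and part (ii) to \citet{prussing1986principal}. So there is no in-paper argument to compare against; what you have supplied is a complete, self-contained proof of the classical criterion. Your two key devices are standard and sound: the block $LDL^{\T}$ factorization through the Schur complement $s=c-b^{\T}A^{-1}b$, together with $\det(X)=\det(A)\,s$, correctly closes the induction for the PD case, and the expansion $\det(M+\epsilon I)=\sum_k e_k\,\epsilon^{\,|M|-k}$ in elementary symmetric functions of the principal minors correctly reduces the PSD case to the PD case by perturbation. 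It is worth observing that these are exactly the tools the paper itself relies on downstream: the proof of Lemma \ref{lem::det-quad} and Lemma \ref{lem::psd-suff1} runs on the same Schur-complement/Gaussian-elimination identities (their equations \eqref{eq::gauss1}--\eqref{eq::gauss3}), and the paper's handling of the singular case in Lemmas \ref{lem::det-quad} and \ref{lem::inner-saturated} uses the same $X+\epsilon I$ perturbation-and-limit trick you use for part (ii). One small presentational point: your matrix $L$ is block unit lower triangular rather than literally unit lower triangular, but all you need is $\det L=1$ and invertibility of $L^{\T}$, both of which hold, so the argument is unaffected.
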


By Theorem \ref{thm::syl}(i), to verify whether or not a symmetric $m\times m$ matrix $X$ is PD, we need to check the determinants of its $m$ leading principal submatrices. However, by Theorem \ref{thm::syl}(ii), to verify whether or not a symmetric $m\times m$ matrix $X$ is PSD, we need to check the determinants of its $2^m-1$ principal submatrices. Thus, Sylvester's criterion for PSD matrices is less useful due to the exponential growth in the number of principal submatrices as the matrix dimension increases. 

Theorem \ref{thm::syl}(i) is attributed to James Joseph Sylvester (1814–1897). However, Sylvester's criterion for PD matrices has been extended, without proper justification, to test PSD matrices; see \citet{kwan2021remedies} for a survey of such unjustified applications and extensions of Sylvester's criterion. Theorem \ref{thm::syl}(ii) was first introduced by \citet{prussing1986principal}, providing a correct and necessary condition for PSD matrices. 

\subsection{Statement of the new result}
Our new result improves Sylvester's criterion for PSD matrices by only checking the determinants of its $m(m+1)/2$ principal submatrices. We introduce some further definitions below. 

\begin{definition}\label{def::consecutive-principal}
For an $m\times m$ matrix $X$ and integers $a\leq b$, we call $X_{a:b,a:b}$ a consecutive principal submatrix of $X$. 
\end{definition}

By Definitions \ref{def::principal} and \ref{def::consecutive-principal}, a consecutive principal submatrix of $X$ must be a principal submatrix of $X$, and a leading principal submatrix of $X$ must be a consecutive principal submatrix of $X$. 

\begin{definition}\label{def::inner-saturated}
For a symmetric $m\times m$ matrix $X$, we define $X_{I,I}$ as an inner-saturated submatrix of $X$, where $I=\{1,m\}\cup J$ and the index set $J$ satisfies 
\begin{enumerate}
\item[(i)] when $m\leq 2$, $J=\emptyset$; 
\item[(ii)] when $m\geq 3$, $\{X_{2:(m-1),j}:j\in J\}$ is a maximal linearly independent set of the column vectors in $X_{2:(m-1),2:(m-1)}$. 
\end{enumerate}
\end{definition}

When $m=3$, the unique inner-saturated submatrix of $X$ is $X$ itself if $X_{2,2}\not=0$, and is $X_{\{1,3\},\{1,3\}}$ if $X_{2,2}=0$. This is because the maximal linearly independent set of $\{0\}$ is an empty set. When $m\geq 4$, we provide two examples below to illustrate the definition of the inner-saturated submatrix. 

\begin{example}
Consider the following $4\times 4$ symmetric matrix: 
\begin{equation*}
X=\begin{pmatrix}
1 & 2 & 3 & 4 \\
2 & 3 & 4 & 5 \\
3 & 4 & 5 & 6 \\
4 & 5 & 6 & 7 \\
\end{pmatrix}.
\end{equation*}
It is the unique inner-saturated submatrix of itself, because the column vectors of the matrix $\begin{pmatrix}
3 & 4\\
4 & 5
\end{pmatrix}$
are linearly independent. 
\end{example}

\begin{example}
Consider the following $5\times 5$ symmetric matrix: 
\begin{equation*}
X=\begin{pmatrix}
1 & 2 & 3 & 4 & 5 \\
2 & 3 & 4 & 5 & 6\\
3 & 4 & 5 & 6 & 7\\
4 & 5 & 6 & 7 & 8\\
5 & 6 & 7 & 8 & 9\\
\end{pmatrix}.
\end{equation*}
It has three inner-saturated submatrices removing 4th, 3rd, 2nd row and column from $X$, respectively
\begin{equation*}
Y_1=\begin{pmatrix}
1 & 2 & 3 & 5 \\
2 & 3 & 4 & 6\\
3 & 4 & 5 & 7\\
5 & 6 & 7 & 9\\
\end{pmatrix},\quad Y_2=\begin{pmatrix}
1 & 2 & 4 & 5 \\
2 & 3 & 5 & 6\\
4 & 5 & 7 & 8\\
5 & 6 & 8 & 9\\
\end{pmatrix},\quad Y_3=\begin{pmatrix}
1 & 3 & 4 & 5 \\
3 & 5 & 6 & 7\\
4 & 6 & 7 & 8\\
5 & 7 & 8 & 9\\
\end{pmatrix}
\end{equation*}
because the column vectors of the matrix $\begin{pmatrix}
3 & 4 & 5\\
4 & 5 & 6\\
5 & 6 & 7
\end{pmatrix}$
are linearly dependent, but the column vectors of the matrix $\begin{pmatrix}
3 & 4\\
4 & 5
\end{pmatrix}$, $\begin{pmatrix}
3 & 5\\
5 & 7
\end{pmatrix}$, $\begin{pmatrix}
5 & 6\\
6 & 7
\end{pmatrix}$
are linearly independent. 
\end{example}

We present our new result in Theorem \ref{thm::psd2} below. 

\begin{theorem}\label{thm::psd2}
For a symmetric $m\times m$ matrix $X$, the following conditions are equivalent: 

(i) $X$ is a PSD matrix; 

(ii) for any consecutive principal submatrix of $X$, one of its inner-saturated submatrices has a nonnegative determinant;

(iii) for any consecutive principal submatrix of $X$, any of its inner-saturated submatrices has a nonnegative determinant. 
\end{theorem}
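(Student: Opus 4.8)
The plan is to prove the cycle $(\mathrm{i})\Rightarrow(\mathrm{iii})\Rightarrow(\mathrm{ii})\Rightarrow(\mathrm{i})$. The first two implications are essentially free. If $X$ is PSD then every principal submatrix of $X$ is PSD; since any inner-saturated submatrix of a consecutive principal submatrix of $X$ is again a principal submatrix of $X$, Theorem~\ref{thm::syl}(ii) forces its determinant to be nonnegative, which is $(\mathrm{iii})$. And $(\mathrm{iii})\Rightarrow(\mathrm{ii})$ is immediate because every consecutive principal submatrix has at least one inner-saturated submatrix (a maximal linearly independent column set always exists, possibly empty). So the entire content is $(\mathrm{ii})\Rightarrow(\mathrm{i})$, which I would prove by induction on $m$.

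\textbf{Base case and setup of the induction.} For $m\le 2$ the inner-saturated submatrix of each consecutive principal submatrix equals that submatrix, so $(\mathrm{ii})$ reduces to $x_{11}\ge 0$, $x_{mm}\ge 0$, and (when $m=2$) $\det X\ge 0$, i.e.\ the classical Sylvester PSD criterion in dimensions $1,2$. For $m\ge 3$, note that the consecutive principal submatrices of any consecutive principal submatrix $Y$ of $X$ are themselves consecutive principal submatrices of $X$, and that inner-saturated submatrices are defined intrinsically; hence every consecutive principal submatrix of $X$ of dimension $<m$ satisfies $(\mathrm{ii})$, so by the induction hypothesis the blocks $X_{1:(m-1),1:(m-1)}$, $X_{2:m,2:m}$, and $A:=X_{2:(m-1),2:(m-1)}$ are all PSD. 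Writing $u:=X_{2:(m-1),1}$ and $v:=X_{2:(m-1),m}$, the PSD-ness of $X_{1:(m-1),1:(m-1)}$ and $X_{2:m,2:m}$ forces $u,v\in\mathrm{col}(A)$, since otherwise the respective quadratic forms are unbounded below along a line in the direction of the component of $u$ (resp.\ $v$) orthogonal to $\mathrm{col}(A)$.

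\textbf{Core of the inductive step.} Fix a maximal linearly independent column set $J$ of $A$ for which the associated inner-saturated submatrix $X_{I,I}$, $I=\{1\}\cup J\cup\{m\}$, satisfies $\det X_{I,I}\ge 0$; such a $J$ exists by $(\mathrm{ii})$ applied to $X$ itself. Because $A$ is PSD and the columns of $A$ indexed by $J$ are linearly independent, $A_{J,J}$ is PSD and nonsingular, hence positive definite. I would then prove two facts. \emph{First, $X$ is PSD if and only if $X_{I,I}$ is PSD.} Since $\mathrm{col}(A)=\mathrm{span}\{A_{\cdot,j}:j\in J\}$ contains $u$ and $v$, and every row and column of $A$ outside $J$ is a linear combination of those indexed by $J$, one checks that after reordering indices $X=G^{\T}X_{I,I}G$ for an explicit matrix $G$; thus $X_{I,I}$ PSD implies $X$ PSD, while the converse holds because $X_{I,I}$ is a principal submatrix of $X$ (equivalently, one may phrase this via the generalized Moore--Penrose Schur complement of $A$ in $X$, which equals the ordinary Schur complement of $A_{J,J}$ in $X_{I,I}$). \emph{Second, $X_{I,I}$ is PSD.} Its inner block $A_{J,J}$ is positive definite, and for any symmetric $n\times n$ matrix $Y$ ($n\ge2$; the inner block being vacuously positive definite when $n=2$) with $Y_{2:(n-1),2:(n-1)}$ positive definite, two nested Schur complements plus the identity $\det Y=\det(Y_{2:(n-1),2:(n-1)})\cdot\det(\text{Schur complement of the inner block})$ show $Y$ is PSD iff $Y_{1:(n-1),1:(n-1)}$ is PSD, $Y_{2:n,2:n}$ is PSD, and $\det Y\ge0$; here the two required principal submatrices of $X_{I,I}$ are principal submatrices of the already-PSD $X_{1:(m-1),1:(m-1)}$ and $X_{2:m,2:m}$, and $\det X_{I,I}\ge0$ by the choice of $J$. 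Combining the two facts yields that $X$ is PSD.

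\textbf{Expected main obstacle.} The delicate step is the equivalence ``$X$ PSD $\iff$ $X_{I,I}$ PSD'': it requires carefully exploiting that $u,v\in\mathrm{col}(A)$ (which is precisely where the PSD-ness of the two maximal proper consecutive submatrices is consumed) together with the fact that the inner rows/columns outside $J$ are genuine linear combinations of those in $J$, in order to realize $X$ as a congruence of $X_{I,I}$; the bookkeeping of this reduction, and of the auxiliary Schur-complement/determinant lemma for matrices with positive-definite inner block, is where the real work lies, though each ingredient is elementary.
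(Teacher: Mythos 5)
Your proposal is correct, and it shares the paper's overall skeleton for the hard implication (ii)$\Rightarrow$(i): induction on $m$, inheritance of condition (ii) by the consecutive blocks so that $X_{1:(m-1),1:(m-1)}$, $X_{2:m,2:m}$ and $A=X_{2:(m-1),2:(m-1)}$ are PSD, reduction to an inner-saturated submatrix $X_{I,I}$ whose inner block $A_{J,J}$ is positive definite, and a Schur-complement sufficiency lemma (the paper's Lemma \ref{lem::psd-suff1}) to conclude $X_{I,I}$ is PSD. Where you genuinely diverge is the passage from ``$X_{I,I}$ is PSD'' to ``$X$ is PSD.'' The paper gets there by verifying that \emph{every} principal minor of $X$ is nonnegative and invoking Theorem \ref{thm::syl}(ii); this forces it to prove Lemma \ref{lem::inner-saturated} via an $\epsilon$-perturbation limit, establishing both that $\mathrm{det}(X_{I',I'})=0$ when the corresponding inner block is singular and that the determinants of different inner-saturated submatrices are positive multiples of one another as quadratics in $X_{1,m}$. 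You instead exhibit an explicit congruence $X=G^{\T}X_{I,I}G$ (after permutation), which I have checked goes through: writing $A=\bigl(\begin{smallmatrix} A_1 & A_2\\ A_2^{\T} & A_3\end{smallmatrix}\bigr)$ with $A_1=A_{J,J}$, the rank condition gives $A_3=A_2^{\T}A_1^{-1}A_2$, and $u,v\in\mathrm{col}(A)$ (forced by PSD-ness of the two overlapping blocks) gives $u_2=A_2^{\T}A_1^{-1}u_1$ and $v_2=A_2^{\T}A_1^{-1}v_1$, which is exactly what makes $G=\bigl(\begin{smallmatrix}1&0&0&0\\ 0&I&A_1^{-1}A_2&0\\ 0&0&0&1\end{smallmatrix}\bigr)$ work. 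Your route buys a shorter and more conceptual argument that avoids the limiting construction and the proportionality statement entirely, and the congruence makes transparent why a single inner-saturated submatrix suffices in (ii); the paper's route, by contrast, yields the proportionality of all the inner-saturated determinants as a by-product, which it reuses in Proposition \ref{prop::psd}. The one step you should write out in full is the verification that the off-$J$ components of $u$ and $v$ are determined by the $J$-components as above, since that is precisely where the hypothesis on the two maximal consecutive blocks is consumed.
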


To appreciate the significance of conditions (ii) and (iii) in Theorem \ref{thm::psd2}, we remark that for a general symmetric matrix $X$, even if one of its inner-saturated submatrices has a nonnegative determinant, it is still possible that its other inner-saturated submatrices have a negative determinant. However, in our Theorem \ref{thm::psd2}, the condition (ii) is equivalent to conditions (i) and (iii). Based on condition (ii), to verify whether or not a symmetric $m\times m$ matrix $X$ is PSD, we only need to check the determinants of $m(m+1)/2$ principal submatrices, including one inner-saturated submatrix of each consecutive principal submatrix of $X$. By Definitions \ref{def::consecutive-principal} and \ref{def::inner-saturated}, any inner-saturated submatrix of a consecutive principal submatrix of $X$ is a principal submatrix of $X$. Therefore, Theorem \ref{thm::psd2} is stronger than the classic Sylvester’s criterion for PSD matrices in Theorem \ref{thm::syl}(ii).

\section{Toward elementwise characterization of PD and PSD matrices}\label{sec2}

\subsection{Theoretical results}

Theorem \ref{thm::psd2} gives a sufficient and necessary condition for a PSD matrix based on all its consecutive principal submatrices. These conditions of consecutive principal submatrices lead to elementwise characterization of PSD matrices. We also have analogous conditions of consecutive principal submatrices which lead to elementwise characterization of PD matrices. In this section, we provide a method to determine the range of elements in PD and PSD matrices, which forms the foundation for the applications in Sections \ref{sec3} and \ref{sec4}. 

We first introduce the following two propositions. They are crucial for determining the range of elements in PD and PSD matrices. 

\begin{proposition}\label{prop::pd}
Consider a symmetric $m\times m$ matrix $X$, where both $X_{1:(m-1),1:(m-1)}$ and $X_{2:m,2:m}$ are PD. (i) $X$ is PD if and only if $X$ has a positive determinant. (ii) For arbitrary values in $X$ except $X_{1,m}$ and $X_{m,1}$, there always exists a real number $k$ such that we can set $X_{1,m}=X_{m,1}=k$ to ensure $X$ is PD. 
\end{proposition}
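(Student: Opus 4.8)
The plan is to reduce both parts to a one-dimensional analysis of how $\det(X)$ depends on the off-diagonal entry $k := X_{1,m} = X_{m,1}$, holding all other entries fixed. First I would observe that, viewing $X$ as a function of $k$ with everything else frozen, the cofactor expansion of $\det(X)$ along the first row (or last column) shows that $\det(X)$ is a quadratic polynomial in $k$; moreover the coefficient of $k^2$ is $-\det(X_{2:(m-1),2:(m-1)})$ up to sign. Since $X_{2:m,2:m}$ is PD, its leading principal submatrix $X_{2:(m-1),2:(m-1)}$ is PD, hence has positive determinant, so the quadratic has a strictly negative leading coefficient: $\det(X) = -a k^2 + bk + c$ with $a>0$. (When $m=2$ the ``inner block'' is empty, $\det$ is affine or constant in $k$; I would handle this degenerate case separately, where PD-ness of the two $1\times 1$ blocks $X_{1,1},X_{2,2}$ plus $\det X>0$ is exactly the standard $2\times 2$ criterion.)

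For part (i), I need: given that $X_{1:(m-1),1:(m-1)}$ and $X_{2:m,2:m}$ are PD, $X$ is PD $\iff \det(X)>0$. The forward direction is immediate since PD matrices have positive determinant. For the converse, I would use Sylvester's criterion, Theorem~\ref{thm::syl}(i): the leading principal minors $\det(X_{1:b,1:b})$ for $b = 1,\dots,m-1$ are all positive because they are leading principal minors of the PD matrix $X_{1:(m-1),1:(m-1)}$, and the last one $\det(X_{1:m,1:m}) = \det(X)$ is positive by hypothesis; hence all $m$ leading principal minors are positive and $X$ is PD. (Alternatively one can argue via the block form: a symmetric matrix with PD top-left block is PD iff its Schur complement is positive, and $\det(X)>0$ with the $(m-1)\times(m-1)$ block PD forces the scalar Schur complement to be positive.)

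For part (ii), I would use part (i): it suffices to exhibit a $k$ making $\det(X)>0$, since the two bordered blocks are already PD by assumption and do not involve $k$. From the quadratic $\det(X) = -ak^2 + bk + c$ with $a = \det(X_{2:(m-1),2:(m-1)}) > 0$, the maximum over $k\in\mathbb{R}$ is $c + b^2/(4a)$, attained at $k^\star = b/(2a)$. So I must show this maximum is strictly positive, i.e.\ $\det(X)$ can be made positive by some choice of $k$. Here I would invoke the fact that both $X_{1:(m-1),1:(m-1)}$ and $X_{2:m,2:m}$ are PD: pick $k$ so that, say, the ``predicted'' value of $X_{1,m}$ from the regression of the first coordinate on coordinates $2:m$ inside the PD matrix $X_{2:m,2:m}$ is respected — concretely, choosing $k$ to make $X$ the unique completion whose Schur complement with respect to $X_{2:m,2:m}$ is positive. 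The cleanest route: the matrix $X_{2:m,2:m}$ is PD, and we are free to choose the border entry $k$; the bordered matrix $X$ (row/column $1$ adjoined) is PD iff $X_{1,1} - X_{1,2:m}(X_{2:m,2:m})^{-1}X_{2:m,1} > 0$, and $k$ appears in $X_{1,2:m}$; since $X_{2:m,2:m}^{-1}$ is PD, this Schur complement is itself a strictly concave quadratic in $k$ with a finite maximum, and — crucially — this maximum is exactly $X_{1,1}$ minus the quadratic form over the \emph{remaining} (fixed) coordinates, which can be made to equal $\det(X_{1:(m-1),1:(m-1)})/\det(X_{2:(m-1),2:(m-1)}) > 0$ at the optimal $k$.

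The main obstacle I anticipate is making the last step of part (ii) rigorous and clean: one must verify that the value of $k$ that maximizes $\det(X)$ — equivalently, maximizes the Schur complement — actually yields a \emph{positive} value, not merely a finite one. The natural way to see this is to note that at $k = k^\star$ the matrix $X$ is, by construction, the ``most positive'' symmetric completion, and a direct computation (Schur complement / cofactor bookkeeping) identifies $\max_k \det(X) \cdot \det(X_{2:(m-1),2:(m-1)}) = \det(X_{1:(m-1),1:(m-1)}) \cdot \det(X_{2:m,2:m})$, which is a product of two positive numbers (both bordered blocks being PD) divided by nothing — hence strictly positive. Establishing this determinantal identity — perhaps via the Desnanot–Jacobi (Dodgson condensation) identity applied to $X$, which relates $\det(X)\det(X_{2:(m-1),2:(m-1)})$ to the four ``connected minors'' $\det(X_{1:(m-1),1:(m-1)})$, $\det(X_{2:m,2:m})$, and the two minors involving $k$ — is the technical crux, and once it is in hand both the existence of a suitable $k$ and the identification of the optimal $k$ follow at once.
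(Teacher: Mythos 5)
Your proposal is correct and follows the same overall architecture as the paper's proof: part (i) is handled identically (leading principal minors of $X_{1:(m-1),1:(m-1)}$ plus $\det(X)>0$, then Sylvester's criterion for PD), and part (ii) reduces to showing that the concave quadratic $k\mapsto\det(X)$ attains a positive value. Your key identity
\begin{equation*}
\max_{k}\,\det(X)\cdot\det\bigl(X_{2:(m-1),2:(m-1)}\bigr)=\det\bigl(X_{1:(m-1),1:(m-1)}\bigr)\det\bigl(X_{2:m,2:m}\bigr)
\end{equation*}
is exactly equivalent to the discriminant formula $b^2-4ac=4\det(X_{1:(m-1),1:(m-1)})\det(X_{2:m,2:m})$ that the paper isolates as Lemma~\ref{lem::det-quad}, since the maximum of $ak^2+bk+c$ with $a<0$ equals $-(b^2-4ac)/(4a)$. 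The only genuine difference is how you propose to establish that identity: you suggest the Desnanot--Jacobi (Dodgson condensation) identity, which for symmetric $X$ gives $\det(X)\det(X_{2:(m-1),2:(m-1)})=\det(X_{1:(m-1),1:(m-1)})\det(X_{2:m,2:m})-\bigl(\det(X_{1:(m-1),2:m})\bigr)^2$ and hence the result immediately (the subtracted minor is affine in $k$ with nonzero slope $\pm\det(X_{2:(m-1),2:(m-1)})$, so it can be zeroed out); the paper instead derives the same formula by explicit Gaussian elimination and Schur complements. In the PD setting the inner block is automatically invertible, so your route is arguably cleaner, avoiding the perturbation argument the paper needs in Lemma~\ref{lem::det-quad} for the singular case. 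One small slip: for $m=2$ the determinant $X_{1,1}X_{2,2}-k^2$ is not affine in $k$ but quadratic with leading coefficient $-1$; this is harmless since you treat $m=2$ separately and the standard $2\times2$ criterion (e.g.\ $k=0$) settles it.
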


\begin{proposition}\label{prop::psd}
Consider a symmetric $m\times m$ matrix $X$, where both $X_{1:(m-1),1:(m-1)}$ and $X_{2:m,2:m}$ are PSD. (i) $X$ is PSD if and only if one of the inner-saturated submatrices of $X$ has a nonnegative determinant. (ii) For arbitrary values in $X$ except $X_{1,m}$ and $X_{m,1}$, there always exists a real number $k$ such that we can set $X_{1,m}=X_{m,1}=k$ to ensure $X$ is PSD. 
\end{proposition}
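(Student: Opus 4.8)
The plan is to derive both parts from one structural reduction: under the stated hypotheses, $X$ is congruent to $X_{I,I}\oplus0$ for \emph{every} inner-saturated submatrix $X_{I,I}$, after which a $2\times2$ Schur-complement computation finishes everything. Write $A=X_{1:(m-1),1:(m-1)}$, $B=X_{2:m,2:m}$ and $C=X_{2:(m-1),2:(m-1)}$; by hypothesis $A,B\succeq0$, so $C\succeq0$. Fix an inner-saturated index set $I=\{1,m\}\cup J$ and put $K=(2:(m-1))\setminus J$; by Definition \ref{def::inner-saturated} the columns of $C$ indexed by $J$ form a basis of $\mathrm{col}(C)$, so $|J|=\mathrm{rank}(C)$. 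The reduction uses two elementary facts. First, since $A\succeq0$ has trailing block $C$, the column $X_{2:(m-1),1}$ lies in $\mathrm{col}(C)$ (otherwise a vector supported on $\ker C$ together with the first coordinate makes $c^{\T}Ac$ unbounded below), and likewise $X_{2:(m-1),m}\in\mathrm{col}(C)$. Second, ordering the $J$-indices before the $K$-indices, $C_{J,J}$ is positive definite (write $C=LL^{\T}$ with $L$ an $(m-2)\times\mathrm{rank}(C)$ matrix of full column rank; then $C_{J,J}=L_{J,\cdot}L_{J,\cdot}^{\T}$, and $L_{J,\cdot}$ is square with independent rows because the $J$-columns of $C$ are independent), and the rank identity $\mathrm{rank}(C)=\mathrm{rank}(C_{J,J})+\mathrm{rank}\!\big(C_{K,K}-C_{K,J}C_{J,J}^{-1}C_{J,K}\big)$ then forces $C_{K,K}=C_{K,J}C_{J,J}^{-1}C_{J,K}$. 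Now perform the symmetric block elimination of the $K$-rows and $K$-columns against $C_{J,J}$, that is, the congruence $S^{\T}XS$ with $S$ equal to the identity plus the block $-C_{J,J}^{-1}C_{J,K}$ in the $(J,K)$ position. Columns $1$, $J$ and $m$ of $S$ are unchanged, so the principal submatrix of $S^{\T}XS$ on rows and columns $\{1,m\}\cup J$ is exactly $X_{I,I}$; and the two facts above imply that every entry in a $K$-row or a $K$-column of $S^{\T}XS$ vanishes. Hence $S^{\T}XS=X_{I,I}\oplus0$, so $X\succeq0$ if and only if $X_{I,I}\succeq0$, for every inner-saturated $X_{I,I}$.

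Next, the middle block of $X_{I,I}$ is $C_{J,J}$, which is positive definite, so taking its Schur complement inside $X_{I,I}$ reduces $X_{I,I}\succeq0$ to positive semidefiniteness of
\begin{equation*}
\begin{pmatrix}a' & k-e\\ k-e & b'\end{pmatrix},\qquad a'=X_{1,1}-p_J^{\T}C_{J,J}^{-1}p_J,\quad b'=X_{m,m}-q_J^{\T}C_{J,J}^{-1}q_J,\quad e=p_J^{\T}C_{J,J}^{-1}q_J,
\end{equation*}
where $p_J=(X_{j,1})_{j\in J}$, $q_J=(X_{j,m})_{j\in J}$ and $k=X_{1,m}$. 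Here $a'$ and $b'$ are the Schur complements of $C_{J,J}$ in the leading and trailing principal submatrices of $X_{I,I}$, which are principal submatrices of $A$ and $B$ and hence positive semidefinite, so $a',b'\ge0$; therefore $X_{I,I}\succeq0$ iff $a'b'-(k-e)^2\ge0$, and since $\det X_{I,I}=\det(C_{J,J})\big(a'b'-(k-e)^2\big)$ with $\det(C_{J,J})>0$ this is equivalent to $\det X_{I,I}\ge0$. Combined with the reduction, $X\succeq0\iff\det X_{I,I}\ge0$ for every inner-saturated $X_{I,I}$. This gives the ``if'' direction of (i) (for the particular inner-saturated submatrix whose determinant is nonnegative), while the ``only if'' direction is immediate because any principal submatrix of a PSD matrix is PSD; together they also show that here the ``one of'' and ``any of'' phrasings coincide.

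Finally, for (ii), observe that $a'$, $b'$, $e$ and $\det(C_{J,J})$ do not involve $X_{1,m}$, and $a',b'\ge0$, $\det(C_{J,J})>0$ hold regardless of $k$. Hence, as a function of $k=X_{1,m}$, $\det X_{I,I}=\det(C_{J,J})\big(a'b'-(k-e)^2\big)$ is a downward parabola with maximum value $\det(C_{J,J})\,a'b'\ge0$; choosing $k=e$ — indeed any $k$ in the nonempty interval $[\,e-\sqrt{a'b'},\,e+\sqrt{a'b'}\,]$ — makes $\det X_{I,I}\ge0$, so by the two steps above the resulting $X$ is PSD. The real obstacle is the congruence reduction of the first paragraph: showing that the redundant part of the possibly singular block $C$ can be cleared by a congruence that leaves $X_{I,I}$ intact and annihilates everything else. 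It hinges on the containments $X_{2:(m-1),1},X_{2:(m-1),m}\in\mathrm{col}(C)$ forced by $A,B\succeq0$ and on the vanishing of the Schur complement of $C_{J,J}$ in $C$ forced by $|J|=\mathrm{rank}(C)$; both are standard but require careful index bookkeeping against the set $J$ of Definition \ref{def::inner-saturated}. After that, the Schur-complement identities and the one-variable quadratic are routine.
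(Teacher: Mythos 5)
Your proof is correct, but it takes a genuinely different route from the paper's. The paper obtains part (i) as a corollary of Theorem \ref{thm::psd2} (itself proved by induction with Lemmas \ref{lem::psd-suff1} and \ref{lem::inner-saturated}, the latter handling the singular middle block via an $\epsilon$-perturbation limit), and obtains part (ii) from Lemma \ref{lem::det-quad}'s discriminant identity $b^2-4ac=4\,\mathrm{det}(Y_{1:(m'-1),1:(m'-1)})\,\mathrm{det}(Y_{2:m',2:m'})$ applied to one inner-saturated submatrix $Y$. You instead give a self-contained structural argument: the column-space containments $X_{2:(m-1),1},X_{2:(m-1),m}\in\mathrm{col}(C)$ forced by positive semidefiniteness of the two overlapping blocks, together with $C_{J,J}\succ 0$ and the vanishing Schur complement $C_{K,K}-C_{K,J}C_{J,J}^{-1}C_{J,K}=0$ forced by $|J|=\mathrm{rank}(C)$, yield a unimodular congruence $S^{\T}XS=X_{I,I}\oplus 0$, after which a $2\times 2$ Schur-complement computation simultaneously delivers the determinant criterion of part (i) and the explicit downward parabola $\det X_{I,I}=\det(C_{J,J})\bigl(a'b'-(k-e)^2\bigr)$ with nonnegative maximum for part (ii). I checked the key steps (the column-space argument, $C_{J,J}=L_{J,\cdot}L_{J,\cdot}^{\T}$ with $L_{J,\cdot}$ invertible, the Guttman rank identity, and the verification that all $K$-rows and $K$-columns of $S^{\T}XS$ vanish) and they are sound, including the degenerate cases $J=\emptyset$. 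What your approach buys is a proof with no limiting arguments and no appeal to the induction behind Theorem \ref{thm::psd2}; as a bonus it shows $X$ is congruent to any inner-saturated submatrix padded with zeros, which immediately explains why ``one of'' and ``any of'' coincide here (the content of Lemma \ref{lem::inner-saturated}(ii), which the paper proves by a more delicate perturbation argument). What the paper's route buys is economy: it reuses Lemma \ref{lem::det-quad} and Theorem \ref{thm::psd2}, which are needed elsewhere anyway, and your quadratic is in fact the restriction of Lemma \ref{lem::det-quad}'s identity to the inner-saturated submatrix.
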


By Proposition \ref{prop::pd} or \ref{prop::psd}, based on the determinant of $X$ or one of its inner-saturated submatrices, we can determine the range of $X_{1,m}=X_{m,1}$ such that $X$ is PD or PSD, if $X_{1:(m-1),1:(m-1)}$ and $X_{2:m,2:m}$ are PD or PSD. Recursively, we can determine the range of $X_{1,m-1}$ and $X_{2,m}$ such that $X_{1:(m-1),1:(m-1)}$ and $X_{2:m,2:m}$ are PD or PSD, respectively, if all of $X_{1:(m-2),1:(m-2)}$, $X_{2:(m-1),2:(m-1)}$ and $X_{3:m,3:m}$ are PD or PSD. The process can proceed recursively to determine the range of all elements in $X$. We give more details in the next two subsections. 

\subsection{General method}

For simplicity of the discussion, we define the $k$-diagonal elements in $X$ as the set $\{X_{i,i+k}: 1\leq i\leq m-k\}$, for $0\leq k\leq m-1$. When $k=0$, the $k$-diagonal elements are exactly the diagonal elements in $X$. 

For a PD matrix, first, we can determine the range of diagonal elements in $X$: they must all be positive. Then, we can determine the range of 1-diagonal elements in $X$, given the diagonal elements in $X$: the element $X_{i,i+1}$ must satisfy that $X_{i:(i+1),i:(i+1)}$ has a positive determinant. We can proceed to determine the range of $k$-diagonal elements, given the previous elements, for $2\leq k\leq m-1$: the element $X_{i,i+k}$ must satisfy that $X_{i:(i+k),i:(i+k)}$ has a positive determinant. 

For a PSD matrix, the procedure is similar. For a general $k\leq m-1$, we can determine the range of $k$-diagonal elements, given the previous elements: the element $X_{i,i+k}$ must satisfy that one inner-saturated submatrix of $X_{i:(i+k),i:(i+k)}$ has a nonnegative determinant.

The above discussion motivates a partial ordering on the upper triangular elements in $X$, denoted as $(\{X_{i,j},i\leq j\}, \preceq)$. We define $X_{i',j'}\preceq X_{i,j}$ if and only if $i\leq i'\leq j'\leq j$. From the previous discussion, when $i\leq j$, we can determine the range of $X_{i,j}$ based on all other elements in $X_{i:j,i:j}$; and any upper triangular element $X_{i',j'}$ in $X_{i:j,i:j}$ must satisfy $i\leq i'\leq j'\leq j$, ensuring that $X_{i,j}\preceq X_{i',j'}$ based on our definition. Equivalently, we can consider a directed acyclic graph. The vertex set contains all the upper triangular elements in $X$. For element $X_{i,j}$ where $i<j$, there is a directed edge from $X_{i,j-1}$ to $X_{i,j}$ and a directed edge from $X_{i+1,j}$ to $X_{i,j}$. See Figure \ref{fig1} below for the graph construction when $m=4$. From the graph, $X_{i',j'}\preceq X_{i,j}$ if and only if $X_{i',j'}$ is a predecessor of $X_{i,j}$. All diagonal elements in $X$ do not have predecessors, and we can determine their range at the beginning. For a general element $X_{i,j}$ with $i<j$, we can determine its range given its predecessors. 

\begin{figure}[h]
\centering
\begin{tikzpicture}[node distance=1.5cm and 1.5cm, 
                    every node/.style={circle, draw, minimum size=1cm, inner sep=2pt}]
    \node (X11) at (0,0) {$X_{1,1}$};
    \node (X22) at (2,0) {$X_{2,2}$};
    \node (X33) at (4,0) {$X_{3,3}$};
    \node (X44) at (6,0) {$X_{4,4}$};
    
    \node (X12) at ($(X11)!0.5!(X22)+(0,1.5)$) {$X_{1,2}$};
    \node (X23) at ($(X22)!0.5!(X33)+(0,1.5)$) {$X_{2,3}$};
    \node (X34) at ($(X33)!0.5!(X44)+(0,1.5)$) {$X_{3,4}$};

    \node (X13) at ($(X12)!0.5!(X23)+(0,1.5)$) {$X_{1,3}$};
    \node (X24) at ($(X23)!0.5!(X34)+(0,1.5)$) {$X_{2,4}$};

    \node (X14) at ($(X13)!0.5!(X24)+(0,1.5)$) {$X_{1,4}$};

    \draw[->] (X11) -- (X12);
    \draw[->] (X22) -- (X12);
    \draw[->] (X22) -- (X23);
    \draw[->] (X33) -- (X23);
    \draw[->] (X33) -- (X34);
    \draw[->] (X44) -- (X34);

    \draw[->] (X12) -- (X13);
    \draw[->] (X23) -- (X13);
    \draw[->] (X23) -- (X24);
    \draw[->] (X34) -- (X24);

    \draw[->] (X13) -- (X14);
    \draw[->] (X24) -- (X14);
\end{tikzpicture}
\caption{Partial ordering of upper triangular elements in a symmetric $4\times 4$ matrix $X$. }
\label{fig1}
\end{figure}
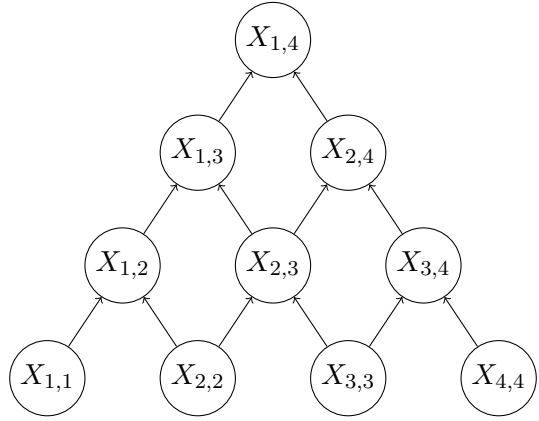

Although the statement of the classic Sylvester's criterion for PD matrices is more concise, our elementwise characterization for PD matrices can be more convenient in some problems, due to the clear mapping between the $m(m+1)/2$ determinant constraints and the $m(m+1)/2$ elements in $X$. Our characterization for PSD matrices is definitely more convenient than classic Sylvester's criterion for PSD matrices especially when $m$ is large. We will illustrate the applications of these results in Sections \ref{sec3} and \ref{sec4}. Before presenting the applications, we discuss a simple problem of determining the range of the element in a $4\times 4$ matrix.

\subsection{Example: $4\times 4$ matrix}
For a PD matrix $X$, the range of elements in $X$ can be determined below: 
\begin{itemize}
\item $X_{1,1},X_{2,2},X_{3,3},X_{4,4}>0$;
\item given $\{X_{1,1},X_{2,2},X_{3,3},X_{4,4}\}$, then $X_{1,2},X_{2,3}, X_{3,4}$ must satisfy $\mathrm{det}(X_{1:2,1:2})>0$, $\mathrm{det}(X_{2:3,2:3})>0$, $\mathrm{det}(X_{3:4,3:4})>0$, respectively; 
\item given $\{X_{1,1},X_{2,2},X_{3,3},X_{4,4},X_{1,2},X_{2,3}, X_{3,4}\}$, then $X_{1,3},X_{2,4}$ must satisfy $\mathrm{det}(X_{1:3,1:3})>0$, $\mathrm{det}(X_{2:4,2:4})>0$, respectively; 
\item given $\{X_{1,1},X_{2,2},X_{3,3},X_{4,4},X_{1,2},X_{2,3}, X_{3,4},X_{1,3},X_{2,4}\}$, then $X_{1,4}$ must satisfy $\mathrm{det}(X)>0$. 
\end{itemize}
For a PSD matrix $X$, assuming $X_{2,2}$ and $X_{3,3}$ are positive, the range of elements in $X$ can be determined below: 
\begin{itemize}
\item $X_{1,1},X_{2,2},X_{3,3},X_{4,4}\geq 0$;
\item given $\{X_{1,1},X_{2,2},X_{3,3},X_{4,4}\}$, then $X_{1,2},X_{2,3}, X_{3,4}$ must satisfy $\mathrm{det}(X_{1:2,1:2})\geq 0$, $\mathrm{det}(X_{2:3,2:3})\geq 0$, $\mathrm{det}(X_{3:4,3:4})\geq 0$, respectively; 
\item given $\{X_{1,1},X_{2,2},X_{3,3},X_{4,4},X_{1,2},X_{2,3}, X_{3,4}\}$, then $X_{1,3},X_{2,4}$ must satisfy $\mathrm{det}(X_{1:3,1:3})\geq 0$, $\mathrm{det}(X_{2:4,2:4})\geq 0$, respectively; 
\item given $\{X_{1,1},X_{2,2},X_{3,3},X_{4,4},X_{1,2},X_{2,3}, X_{3,4},X_{1,3},X_{2,4}\}$, then $X_{1,4}$ must satisfy $\mathrm{det}(X)\geq 0$ when $\mathrm{det}(X_{2:3,2:3})\not=0$, and $\mathrm{det}(X_{\{1,2,4\},\{1,2,4\}})\geq 0$ when $\mathrm{det}(X_{2:3,2:3})=0$. 
\end{itemize}
For the edge case with $X_{2,2}=0$, a PSD matrix $X$ must satisfy that $X_{\{1,3,4\},\{1,3,4\}}$ is PSD and $X_{2,i}=X_{2,i}=0$ for $1\leq i\leq 4$. To ensure that $X_{\{1,3,4\},\{1,3,4\}}$ is PSD, the range of elements in $X_{\{1,3,4\},\{1,3,4\}}$ can be determined below:
\begin{itemize}
\item $X_{1,1},X_{3,3},X_{4,4}\geq 0$;
\item given $\{X_{1,1},X_{3,3},X_{4,4}\}$, then $X_{1,3}, X_{3,4}$ must satisfy $\mathrm{det}(X_{\{1,3\},\{1,3\}})\geq 0$, $\mathrm{det}(X_{3:4,3:4})\geq 0$, respectively; 
\item given $\{X_{1,1},X_{3,3},X_{4,4},X_{1,3},X_{3,4}\}$, then $X_{1,4}$ must satisfy $\mathrm{det}(X)\geq 0$ when $X_3\not=0$, and $\mathrm{det}(X_{\{1,4\},\{1,4\}})\geq 0$ when $\mathrm{det}(X_{2:3,2:3})=0$. 
\end{itemize}
The edge case with $X_{3,3}=0$ is similar.

\section{Application 1: PD and PSD matrix completion}\label{sec3}

In this section, we focus on the problem of completing a partially observed symmetric matrix $X$ to ensure it is PD or PSD. Given that not all entries of $X$ are observed, a fundamental problem is to determine whether the missing entries can be completed in such a way that $X$ becomes PD or PSD. This problem, known as PD or PSD matrix completion, been studied previously \citep{grone1984positive, barrett1989determinantal, johnson1990matrix}. Specifically, they associate the observed pattern of an $m\times m$ matrix $X$ with a graph $G$, where $G$ has $m$ vertices $1:m$, with $i$ and $j$ connected if $X_{i,j}$ is observed. In graph theory, a graph is chordal if every cycle with four or more vertices has a chord. Here a cycle is a closed path of distinct vertices, and a chord is an edge connecting two non-adjacent vertices in the cycle. When $G$ is a chordal graph, \citet{grone1984positive} prove that $X$ can be completed to a PD or PSD matrix as long as all the fully observed principal submatrices of $X$ are PD or PSD. However, when $G$ is not a chordal graph, it remains unclear how to quickly determine if a general partially observed symmetric matrix $X$ has a PD or PSD completion.

For simplicity of the presentation, we first discuss PD completion. Based on Section \ref{sec2}, our approach is to determine if the range of missing entries in $X$ is empty when $X$ is PD. We begin with a specific example below and then give a more general discussion. 

\begin{example}
Consider
\begin{equation*}
X=\begin{pmatrix}
1 & 0.8 & 0.6 & 0.8 & x_3 \\
0.8 & 1 & 0.4 & x_2 & 0.5 \\
0.6 & 0.4 & 1 & x_1 & 0.6 \\
0.8 & x_2 & x_1 & 1 & 0.9 \\
x_3 & 0.5 & 0.6 & 0.9 & 1 \\
\end{pmatrix},
\end{equation*}
where $x_1,x_2,x_3$ are missing entries in $X$. The associated graph of $X$ is not a chordal graph. This is because $(1,4,5,2)$ forms a cycle with four vertices but lacks a chord in the associated graph.  

By Proposition \ref{prop::pd}, we only need to determine the range of $(x_1,x_2)$ such that both $X_{1:4,1:4}$ and $X_{2:5,2:5}$ are PD, because there always exists a real number $x_3$ such that $X$ is PD, provided that both $X_{1:4,1:4}$ and $X_{2:5,2:5}$ are PD. By exchanging rows and columns, we take $Y=X_{\{2,3,1,4\},\{2,3,1,4\}}$ and $Z=X_{\{2,3,5,4\},\{2,3,5,4\}}$ as follows: 

\begin{equation*}
Y=\begin{pmatrix}
1 & 0.4 & 0.8 & x_2 \\
0.4 & 1 & 0.6 & x_1 \\
0.8 & 0.6 & 1 & 0.8 \\
x_2 & x_1 & 0.8 & 1 \\
\end{pmatrix},
\quad 
Z=\begin{pmatrix}
1 & 0.4 & 0.5 & x_2 \\
0.4 & 1 & 0.6 & x_1 \\
0.5 & 0.6 & 1 & 0.9 \\
x_2 & x_1 & 0.9 & 1 \\
\end{pmatrix}.
\end{equation*}
Since $Y_{1:3,1:3}$ and $Y_{3:4,3:4}$ are PD, to ensure $Y$ is PD, $(x_1,x_2)$ must satisfy $\mathrm{det}(Y_{2:4,2:4})>0$ and $\mathrm{det}(Y)>0$. Moreover, we can express $\mathrm{det}(Y_{2:4,2:4})$ as a function of $x_1$ and $\mathrm{det}(Y)$ as a function of $(x_1,x_2)$ below
\begin{equation*}
\begin{split}
\mathrm{det}(Y_{2:4,2:4})&=-x_1^2+0.96x_1,\\
\mathrm{det}(Y)&= -0.36x_1^2 -0.64x_2^2 -0.16x_1x_2 + 0.448x_1+0.896x_2-0.3136.
\end{split}
\end{equation*}
Thus, $\mathrm{det}(Y_{2:4,2:4})>0$ implies that the range of $x_1$ is $(0,0.96)$, and $\mathrm{det}(Y)>0$ implies that given $x_1$, the range of $x_2$ is 
$$\left(\frac{(0.16x_1-0.896)-\sqrt{0.224(-x_1^2+0.96x_1)}}{1.28},\frac{(0.16x_1-0.896)+\sqrt{0.224(-x_1^2+0.96x_1)}}{1.28}\right).$$
Similarly, since $Z_{1:3,1:3}$ and $Z_{3:4,3:4}$ are PD, to ensure $Z$ is PD, $(x_1,x_2)$ must satisfy $\mathrm{det}(Z_{2:4,2:4})>0$ and $\mathrm{det}(Z)>0$. Moreover, we can express $\mathrm{det}(Z_{2:4,2:4})$ as a function of $x_1$ and $\mathrm{det}(Z)$ as a function of $(x_1,x_2)$ below
\begin{equation*}
\begin{split}
\mathrm{det}(Z_{2:4,2:4})&=-x_1^2+1.08x_1-0.17,\\
\mathrm{det}(Z)&= - 0.75x_1^2 -0.64x_2^2 + 0.2x_1x_2 + 0.72x_1+0.468x_2-0.2104.
\end{split}
\end{equation*}
Thus, $\mathrm{det}(Z_{2:4,2:4})>0$ implies that the range of $x_1$ is $(0.54-0.08\sqrt{19},0.54+0.08\sqrt{19})$, and $\mathrm{det}(Z)>0$ implies that given $x_1$, the range of $x_2$ is 
$$\left(\frac{(-0.2x_1-0.468)-\sqrt{0.47(-x_1^2+1.08x_1-0.17)}}{1.28},\frac{(-0.2x_1-0.468)+\sqrt{0.47(-x_1^2+1.08x_1-0.17)}}{1.28}\right).$$

We plot the two ranges of $(x_1,x_2)$ in Figure \ref{fig2} below. Since there is an nonempty intersection of the two ranges, we conclude that $X$ has a PD completion. 

\begin{figure}[h]
\centering
\includegraphics[width=0.8\textwidth]{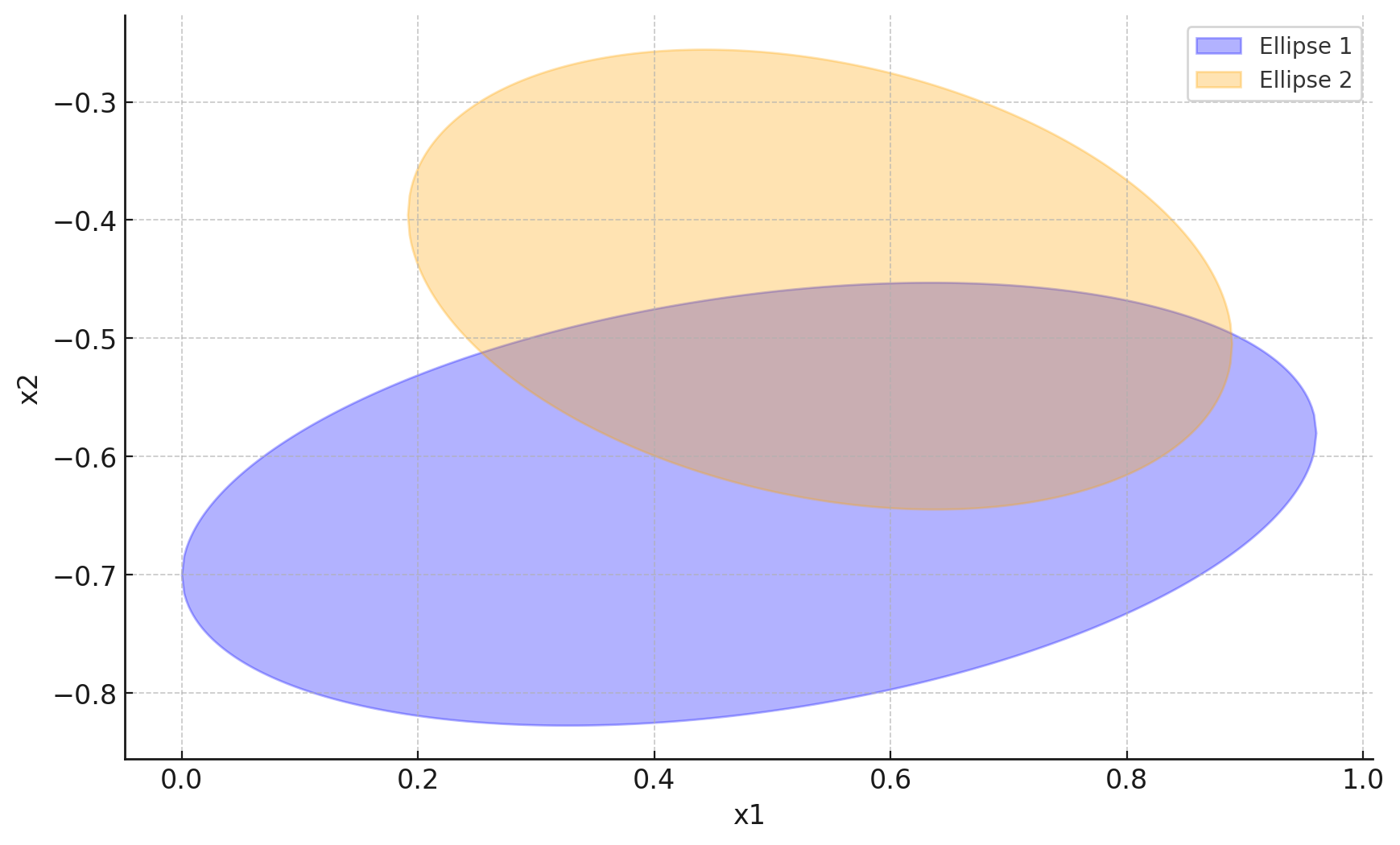}
\caption{Feasible region of $(x_1,x_2)$ as the intersection of the inner parts of two ellipses to ensure $X$ has a PD completion.}\label{fig2}
\end{figure}
\end{example}

\subsection{A general procedure for PD matrix completion}\label{sec3::pd}

Consider a general partially observed symmetric $m\times m$ matrix $X$, consisting of missing values $x_1,x_2,...,x_K$. If $X_{i,i}$ is missing, we can delete the $i$th row and the $i$th column of $X$ to examine whether the remaining submatrix has a PD completion because we can choose $X_{i,i}$ to be large enough such that the entire matrix is PD. Therefore, we assume all the diagonal elements in $X$ are observed. To obtain the range of $(x_1,...,x_K)$ such that $X$ is PD, we consider the following steps: 
\begin{enumerate}
\item interchange rows and columns in $X$ such that $X_{1,m}=X_{m,1}=x_K$;
\item determine the range of $(x_1,...,x_{K-1})$ such that both $X_{1:(m-1),1:(m-1)}$ and $X_{2:m,2:m}$ are PD;
\item determine the range of $x_K$ such that $\mathrm{det}(X)>0$, given each $(x_1,...,x_{K-1})$.
\end{enumerate}

Since the second step involves determining the range of $(x_1,...,x_{K-1})$ such that both $X_{1:(m-1),1:(m-1)}$ and $X_{2:m,2:m}$ are PD, we need to repeat the above steps recursively.  Generally, we need to search over the range of $(x_1,...,x_k)$ such that some submatrices of $X$ are PD, for $1\leq k\leq K-1$. The range of $(x_1,...,x_k)$ at each step must be convex, due to the convexity of the space of PD matrices and the convexity of intersection of finite convex sets. When $K$ is small, we recommend the grid search to determine the range of $(x_1,...,x_k)$ for each $1\leq k\leq K-1$. When $K$ is large, we can use some tree-based methods to improve the computational efficiency. The computational issues are interesting research topics but they are beyond the scope of our current paper.

\subsection{A general procedure for PSD matrix completion}
The discussion of PSD completion is similar but slightly more complicated. We need to modify the procedure in Section \ref{sec3::pd}: 

\begin{enumerate}
\item interchange rows and columns in $X$ such that $X_{1,m}=X_{m,1}=x_K$;
\item determine the range of $(x_1,...,x_{K-1})$ such that both $X_{1:(m-1),1:(m-1)}$ and $X_{2:m,2:m}$ are PSD;
\item determine the range of $x_K$ such that one inner-saturated submatrix of $X$ has a nonnegative determinant, given each $(x_1,...,x_{K-1})$.
\end{enumerate}

Similarly, since the second step involves determining the range of $(x_1,...,x_{K-1})$ such that both $X_{1:(m-1),1:(m-1)}$ and $X_{2:m,2:m}$ are PSD, we need to repeat the above steps recursively. The range of $(x_1,...,x_k)$ at each step must also be convex, and when $K$ is small, we recommend the grid search to determine the range of $(x_1,...,x_k)$ for each $1\leq k\leq K-1$. Again, we omit the computational details. 

%

\section{Application 2: nonlinear semidefinite program}\label{sec4}

Another direct application of our result is nonlinear semidefinite program. We consider the optimization problem of the form:
\begin{equation*}
\begin{aligned}
&\min \quad &&f(X)\\
&\text{subject to }\quad &&X \text{ is PSD}, \quad g(X)\geq 0, \quad h(X)=0,
\end{aligned}
\end{equation*}
where $g$ and $h$ can be vector functions of the matrix $X$. When $f,g,h$ are linear functions of $X$, such as $f(X)=\mathrm{trace}(AX)$ for some matrices $A$, the problem is a standard semidefinite program. When $f,g,h$ are general functions of elements in $X$, this problem becomes a nonlinear semidefinite program, which is more challenging to solve. Typical solutions include augmented Lagrangian method, sequential linear or quadratic programming method, and interior point methods; see a recent survey paper \citet{yamashita2015survey} for detailed reviews. Our result provides a reparameterization to nonlinear semidefinite program by translating the PSD constraint into elementwise constraints. 

We discuss the case when $m=4$. To address the constraint that $X$ is PSD, we consider the following 5 cases and solve the optimization problems under these constraints, respectively. 
\begin{itemize}
\item $X_{1,1},X_{4,4}\geq 0$, $X_{2,2},X_{3,3}>0$, $\mathrm{det}(X_{2:3,2:3})>0$, $\mathrm{det}(X_{1:3,1:3})\geq 0$, $\mathrm{det}(X_{2:4,2:4})\geq 0$, $\mathrm{det}(X)\geq 0$. 
\item $X_{1,1},X_{4,4}\geq 0$, $X_{2,2},X_{3,3}>0$, $\mathrm{det}(X_{2:3,2:3})=0$, $\mathrm{det}(X_{1:3,1:3})\geq 0$, $\mathrm{det}(X_{2:4,2:4})\geq 0$, $\mathrm{det}(X_{\{1,2,4\},\{1,2,4\}})\geq 0$. 
\item $X_{1,1},X_{4,4}\geq 0$, $X_{2,2}>0$, $\mathrm{det}(X_{1:2,1:2})\geq 0$, $\mathrm{det}(X_{\{2,4\},\{2,4\}})\geq 0$, $\mathrm{det}(X_{\{1,2,4\},\{1,2,4\}})\geq 0$, $X_{3,3}=X_{1,3}=X_{2,3}=X_{3,4}=0$, 
\item $X_{1,1},X_{4,4}\geq 0$, $X_{3,3}>0$, $\mathrm{det}(X_{\{1,3\},\{1,3\}})\geq 0$, $\mathrm{det}(X_{3:4,3:4})\geq 0$, $\mathrm{det}(X_{\{1,3,4\},\{1,3,4\}})\geq 0$, $X_{2,2}=X_{1,2}=X_{2,3}=X_{2,4}=0$, 
\item $X_{1,1},X_{4,4}\geq 0$, $\mathrm{det}(X_{\{1,4\},\{1,4\}})\geq 0$, $X_{2,2}=X_{3,3}=X_{1,2}=X_{1,3}=X_{2,3}=X_{2,4}=X_{3,4}=0$. 
\end{itemize} 
The above five cases cover all the possibilities that $X$ is PSD, and we only need to find the minimum solution of the corresponding five optimization problems. We provide a following concrete example below. 

\begin{example}
We consider the optimization problem
\begin{equation*}
\begin{aligned}
&\max \quad &&X_{11}^2+X_{22}^2+X_{33}^2+X_{44}^2-X_{12}X_{23}X_{34}-X_{13}X_{24}+X_{14}\\
&\text{subject to }\quad &&X \text{ is PSD}, \quad 0\leq X_{11},X_{22},X_{33},X_{44}\leq 1.
\end{aligned}
\end{equation*}
Notice that for any PSD matrix $X$ satisfying $0\leq X_{11},X_{22},X_{33},X_{44}\leq 1$, we can update the diagonal elements of $X$ to be all equal to 1, where the new matrix also satisfies the PSD constraint and increases the value in the objective function. Therefore, it is equivalent to solving the following optimization problem
\begin{equation*}
\begin{aligned}
&\max \quad &&4-X_{12}X_{23}X_{34}-X_{13}X_{24}+X_{14}\\
&\text{subject to }\quad &&X \text{ is PSD}, \quad X_{11}=X_{22}=X_{33}=X_{44}=1.
\end{aligned}
\end{equation*}
Next, we translate the PSD constraint into the following elementwise constraints: 
\begin{itemize}
\item $-1\leq X_{12},X_{23},X_{34}\leq 1$;
\item $\mathrm{det}(X_{1:3,1:3})\geq 0$, $\mathrm{det}(X_{2:4,2:4})\geq 0$;
\item when $|X_{23}|\not=1$, we have $\mathrm{det}(X)\geq 0$; when $|X_{23}|=1$, we have $\mathrm{det}(X_{\{1,2,4\},\{1,2,4\}})\geq 0$. 
\end{itemize}
To address the two cases corresponding to the third item above, we compute the following two optimization problems: 
\begin{equation*}
\begin{aligned}
&\max \quad &&4-X_{12}X_{23}X_{34}-X_{13}X_{24}+X_{14}\\
&\text{subject to }\quad && X_{11}=X_{22}=X_{33}=X_{44}=1, -1\leq X_{12},X_{34}\leq 1, -1<X_{23}<1\\
&\quad &&\mathrm{det}(X_{1:3,1:3})\geq 0, \mathrm{det}(X_{2:4,2:4})\geq 0, \mathrm{det}(X)\geq 0
\end{aligned}
\end{equation*}
and
\begin{equation*}
\begin{aligned}
&\max \quad &&4-X_{12}X_{23}X_{34}-X_{13}X_{24}+X_{14}\\
&\text{subject to }\quad && X_{11}=X_{22}=X_{33}=X_{44}=1,-1\leq X_{12},X_{34}\leq 1, |X_{23}|=1\\
&\quad &&\mathrm{det}(X_{1:3,1:3})\geq 0, \mathrm{det}(X_{2:4,2:4})\geq 0, \mathrm{det}(X_{\{1,2,4\},\{1,2,4\}})\geq 0.
\end{aligned}
\end{equation*}
They are optimization problems involving six variables with nonlinear objective function and nonlinear constraints. We omit the detail of this particular optimization problem.
\end{example}

We have illustrated our theory with $m=4$. With general $m$, the fundamental idea remains the same but the details become more complicated. The nonlinear semidefinite program plays important roles in many statistical problems, including a recent study on sensitivity analysis in mediation analysis by \citet{zhang2022interpretable}, which motivated this paper.

\section{Proofs}

\subsection{Lemmas}
\begin{lemma}\label{lem::det-quad}
For a symmetric $m\times m$ matrix $X$ with $m\geq 2$, $\mathrm{det}(X)$ is a quadratic function of $X_{1,m}$, given all other elements in $X$. In particular, $\mathrm{det}(X)=aX_{1,m}^2+bX_{1,m}+c$, where
\begin{equation*}
\begin{split}
a&=\begin{cases}
-1 & \text{if }m=2,\\
-\mathrm{det}(X_{2:(m-1),2:(m-1)}) & \text{if }m\geq 3,\\
\end{cases}\\
b&=2\times (-1)^{m+1}\times \mathrm{det}\begin{pmatrix}
X_{1,2:(m-1)} & 0\\
X_{2:(m-1),2:(m-1)} & X_{2:(m-1),m} \\
\end{pmatrix},\\
c&=\mathrm{det}\begin{pmatrix}
X_{1,1} & X_{1,2:(m-1)} & 0\\
X_{2:(m-1),1} & X_{2:(m-1),2:(m-1)} & X_{2:(m-1),m} \\
0 & X_{m,2:(m-1)} & X_{m,m}\\
\end{pmatrix}.\\
\end{split}
\end{equation*}
Furthermore, $b^2-4ac=4\mathrm{det}(X_{1:(m-1),1:(m-1)})\mathrm{det}(X_{2:m,2:m})$. 
\end{lemma}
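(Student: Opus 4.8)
The plan is to verify Lemma~\ref{lem::det-quad} by expanding $\mathrm{det}(X)$ along the first row and the last row (or by cofactor expansion in the $(1,m)$ entry), treating $X_{1,m}=X_{m,1}=t$ as the variable. Because $t$ appears exactly in positions $(1,m)$ and $(m,1)$, Laplace-type expansion shows that $\mathrm{det}(X)$ is a polynomial in $t$ of degree at most $2$: the $t^2$ term comes from the product of the two cofactors that simultaneously delete row/column $1$ and row/column $m$, namely $\pm\,\mathrm{det}(X_{2:(m-1),2:(m-1)})$ with a sign one tracks carefully; the linear term collects the cross-contributions; and the constant term $c$ is just $\mathrm{det}(X)$ evaluated at $t=0$. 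I would present $a,b,c$ in the block-matrix form given in the statement and confirm the claimed signs by a small induction on $m$ or by directly citing the generalized Laplace expansion along two rows.

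The cleanest route to the coefficients is multilinearity of the determinant in the first column. Write the first column of $X$ as $X_{1:m,1} = v + t\, e_m$ where $v$ has $t$ replaced by $0$ in its last coordinate and $e_m$ is the $m$th standard basis vector; simultaneously the last column is $w + t\, e_1$ in the same fashion, but it is cleaner to fix the matrix $\tilde X$ equal to $X$ with the $(1,m)$ and $(m,1)$ entries set to $0$ and write $X = \tilde X + t(e_1 e_m^\T + e_m e_1^\T)$. Then $\mathrm{det}(X) = \mathrm{det}(\tilde X + t e_1 e_m^\T + t e_m e_1^\T)$, and expanding by multilinearity in the first and last columns yields four terms: $\mathrm{det}(\tilde X)$ (this is $c$), two terms linear in $t$ (these combine to $b t$), and one term $t^2 \det(\cdots)$ in which both the first and last columns have been replaced by $e_m$ and $e_1$ respectively — that determinant equals $(-1)^{\text{something}}\det(X_{2:(m-1),2:(m-1)})$, and chasing the permutation sign gives $a=-\det(X_{2:(m-1),2:(m-1)})$ (with the degenerate value $a=-1$ when $m=2$, since the inner block is empty with determinant $1$). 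The expressions for $b$ and $c$ as bordered determinants follow by recognizing each linear-in-$t$ cofactor as a determinant with one column moved to the border, which accounts for the $(-1)^{m+1}$ sign.

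For the discriminant identity $b^2 - 4ac = 4\,\mathrm{det}(X_{1:(m-1),1:(m-1)})\,\mathrm{det}(X_{2:m,2:m})$, the plan is to interpret $\mathrm{det}(X_{1:(m-1),1:(m-1)})$ and $\mathrm{det}(X_{2:m,2:m})$ themselves as linear functions of $t$ — wait, they do not involve $t$ at all, so instead I would use the following. Introduce the function $p(t) = \det(X) = at^2 + bt + c$. One knows $p$ factors over $\mathbb{R}$ iff $b^2 - 4ac \ge 0$; rather than factoring, I would establish the identity as a polynomial identity in the entries of $X$ by a determinantal manipulation. The key tool is the Desnanot–Jacobi (Dodgson condensation) identity applied to $X$: with corners indexed by rows/columns $\{1\}$ and $\{m\}$, it states
\begin{equation*}
\det(X)\cdot \det(X_{2:(m-1),2:(m-1)}) = \det(X_{1:(m-1),1:(m-1)})\cdot \det(X_{2:m,2:m}) - \det(X_{\{2:(m-1)\}\cup\{1\},\,\{2:(m-1)\}\cup\{m\}})\cdot \det(X_{\{2:(m-1)\}\cup\{m\},\,\{2:(m-1)\}\cup\{1\}}).
\end{equation*}
Since $X$ is symmetric, the two off-diagonal minors on the right are equal, and each equals (up to the sign $(-1)^{m+1}$) the bordered determinant appearing in the formula for $b$; hence the second product on the right is $(b/2)^2 = b^2/4$. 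The left side is $-a \cdot \det(X) = -a(at^2+bt+c) $... more directly, $\det(X_{2:(m-1),2:(m-1)}) = -a$, so the left side is $-a\,p(t)$. Plugging in, $-a\,p \;=\; \det(X_{1:(m-1),1:(m-1)})\det(X_{2:m,2:m}) - b^2/4$, and also $p$ evaluated cleverly, or simply comparing: actually the cleanest is to note $-a \cdot c$ is not quite $p$; instead one evaluates Desnanot–Jacobi as an identity of polynomials in $t$ and matches, or — simplest of all — one observes $\det(X_{1:(m-1),1:(m-1)})$ and $\det(X_{2:m,2:m})$ are each independent of $t$ while $-a\,p(t) + b^2/4$ must therefore be independent of $t$ too, forcing $-a(at^2+bt+c)+b^2/4$ to be constant, which is false unless one recognizes the product $\det(X_{1:(m-1),1:(m-1)})\det(X_{2:m,2:m})$ should be read at the value of $t$ making things work. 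The honest resolution: $\det(X_{1:(m-1),1:(m-1)})$ does \emph{not} contain the $(1,m)$ entry so it is genuinely $t$-free, and likewise $\det(X_{2:m,2:m})$; meanwhile $-a$ is $t$-free. So Desnanot–Jacobi gives $-a\cdot p(t) = C - b^2/4$ with $C$ and $b$ both $t$-free, yet the left side is a genuine quadratic in $t$ — contradiction unless $a=0$. The error is that $b$ \emph{does} depend on $t$? No. I will resolve this subtlety in the writeup by instead proving $b^2-4ac\ge 0 \iff$ both trailing/leading minors nonneg via the explicit factorization $p(t) = -\det(X_{2:(m-1),2:(m-1)})(t-r_1)(t-r_2)$ and computing $p$ at the two special values of $t$ that make $X_{1:(m-1),1:(m-1)}$ and $X_{2:m,2:m}$ singular; equivalently, the slick argument is: regard $\det(X_{1:(m-1),1:(m-1)})$ as a function of the free variable in position $(1,m-1)$ — but that is a different variable. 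The main obstacle, and the step I expect to be genuinely delicate, is exactly this discriminant identity: getting the signs right in the bordered-determinant expressions for $b$ and $c$, and then cleanly deriving $b^2-4ac = 4\det(X_{1:(m-1),1:(m-1)})\det(X_{2:m,2:m})$. My preferred clean derivation is via Desnanot--Jacobi condensation with corner rows/columns $1$ and $m$, using symmetry to equate the two mixed minors and identifying $\det(X_{2:(m-1),2:(m-1)})=-a$ and the mixed minor with $\tfrac{1}{2}b$ up to sign; any residual $t$-dependence bookkeeping is handled by noting that the identity is between polynomials in the single entry $X_{1,m}$ and can be checked by comparing the three coefficients, with the quadratic coefficient giving $a^2 = a^2$, the linear giving $2ab$ vs.\ the derivative of the right side, and the constant recovering the classical fact that $\det(X_{1:(m-1),1:(m-1)})\det(X_{2:m,2:m}) - c\det(X_{2:(m-1),2:(m-1)})$ equals the square of the mixed minor, i.e.\ $b^2/4$, which is the $t=0$ instance of Desnanot--Jacobi. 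Putting $a = -\det(X_{2:(m-1),2:(m-1)})$ and $ac = -c\det(X_{2:(m-1),2:(m-1)})$ into this last identity yields $b^2/4 = \det(X_{1:(m-1),1:(m-1)})\det(X_{2:m,2:m}) + ac$, i.e.\ $b^2 - 4ac = 4\det(X_{1:(m-1),1:(m-1)})\det(X_{2:m,2:m})$, as claimed. I would also record the $m=2$ base case ($\det X = X_{1,1}X_{2,2} - X_{1,2}^2$, so $a=-1$, $b=0$, $c=X_{1,1}X_{2,2}$, and $b^2-4ac = 4X_{1,1}X_{2,2}$ matches $4\det(X_{1:1,1:1})\det(X_{2:2,2:2})$) separately, since the inner block $X_{2:(m-1),2:(m-1)}$ is vacuous there.
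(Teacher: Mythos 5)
Your proposal is correct, but it reaches the discriminant identity by a genuinely different route than the paper. The paper partitions $X$ around $A=X_{2:(m-1),2:(m-1)}$ and, assuming $A$ invertible, uses congruence transformations to express $b$, $c$, $\mathrm{det}(X_{1:(m-1),1:(m-1)})$ and $\mathrm{det}(X_{2:m,2:m})$ through the Schur complements $z_1-x^\T A^{-1}x$, $z_2-y^\T A^{-1}y$ and the cross term $x^\T A^{-1}y$, from which $b^2-4ac$ drops out; the singular-$A$ case then needs a separate continuity argument with $X+\epsilon I$. You instead obtain $a,b,c$ by multilinearity in the first and last columns (fine, and no less detailed than the paper, which omits that derivation entirely), and you get the discriminant from the $t=0$ instance of Desnanot--Jacobi condensation with corner rows/columns $1$ and $m$: since $c=\mathrm{det}(X)\big|_{t=0}$, $-a=\mathrm{det}(X_{2:(m-1),2:(m-1)})$, and the mixed minor $\mathrm{det}(X_{1:(m-1),2:m})$ evaluated at $t=0$ equals $(-1)^{m+1}b/2$, symmetry of $X$ turns the condensation identity into $-ac=\mathrm{det}(X_{1:(m-1),1:(m-1)})\mathrm{det}(X_{2:m,2:m})-b^2/4$, which is the claim. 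Because this is a polynomial identity in the entries, it needs no invertibility hypothesis and no $\epsilon$-limit --- a genuine simplification over the paper. One caution for the writeup: the middle of your proposal asserts, then wrestles with, the false claim that the mixed minors are independent of $X_{1,m}$; they are affine in $t=X_{1,m}$, indeed equal to $(-1)^{m+1}(at+b/2)$, which is exactly why the full condensation identity is consistent in every power of $t$ and why only its constant term (the $t=0$ instance) is needed. Delete the false starts and the spurious ``contradiction,'' and state that one observation cleanly.
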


\begin{proof}[Proof of Lemma \ref{lem::det-quad}]
We can derive the expressions of $a,b,c$ from the definition of determinant, but omit the details. We only verify the formula of $b^2 - 4ac$. Define $A=X_{2:(m-1),2:(m-1)}$, $x=X_{2:(m-1),1}$, $y=X_{2:(m-1),m}$, $z_1=X_{1,1}$, $z_2=X_{m,m}$, and partition $X$ as
\begin{equation*}
X=\begin{pmatrix}
z_1 & x^\T & X_{1,m} \\
x & A & y \\
X_{1,m} & y^\T & z_2
\end{pmatrix}.
\end{equation*}

\paragraph{We first consider the case when $A$ is invertible.} We have $a=-\mathrm{det}(A)\not=0$. Since
\begin{equation}
\label{eq::gauss1}
G_1G_2
XG_2^\T G_1^{\T}=\begin{pmatrix}
z_1-x^\T A^{-1}x & 0 & X_{1,m}-x^\T A^{-1}y \\
0 & A & 0 \\
X_{1,m}-x^\T A^{-1}y & 0 & z_2-y^\T A^{-1}y
\end{pmatrix},
\end{equation}
where 
\begin{equation*}
G_1=\begin{pmatrix}
1 & 0 & 0 \\
0 & I & 0 \\
0 & -y^\T A^{-1} & 1
\end{pmatrix},\quad 
G_2=\begin{pmatrix}
1 & -x^\T A^{-1} & 0 \\
0 & I & 0 \\
0 & 0 & 1
\end{pmatrix}
\end{equation*}
have determinants equal to 1, we have
\begin{equation}\label{eq::formula1}
\begin{split}
c=&\mathrm{det}\begin{pmatrix}
z_1-x^\T A^{-1}x & 0 & -x^\T A^{-1}y \\
0 & A & 0 \\
-x^\T A^{-1}y & 0 & z_2-y^\T A^{-1}y
\end{pmatrix}\\
=&\left\{(z_1-x^\T A^{-1}x)(z_2-y^\T A^{-1}y)-(x^\T A^{-1}y)^2\right\}\mathrm{det}(A).
\end{split}
\end{equation}
Similarly, since
\begin{align}
&\begin{pmatrix}
1 & -x^\T A^{-1} \\
0 & I \\
\end{pmatrix}
\begin{pmatrix}
z_1 & x^\T \\
x & A \\
\end{pmatrix}\begin{pmatrix}
1 & 0 \\
-A^{-1}x  & I \\
\end{pmatrix} =\begin{pmatrix}
z_1-x^\T A^{-1}x & 0 \\
0 & A \\
\end{pmatrix},\label{eq::gauss2}\\
&\begin{pmatrix}
I & 0 \\
-y^\T A^{-1} & 1
\end{pmatrix}
\begin{pmatrix}
A & y \\
y^\T & z_2
\end{pmatrix}\begin{pmatrix}
I & -A^{-1}y \\
0 & 1
\end{pmatrix} =\begin{pmatrix}
A & 0 \\
0 & z_2-y^\T A^{-1}y \\
\end{pmatrix},\label{eq::gauss3}\\
&\begin{pmatrix}
1 & -x^\T A^{-1} \\
0 & I \\
\end{pmatrix}
\begin{pmatrix}
x^\T & 0 \\
A & y
\end{pmatrix} =\begin{pmatrix}
0 & -x^\T A^{-1}y\\
A & y \\
\end{pmatrix},\label{eq::gauss4}
\end{align}
we have
\begin{align}
&\mathrm{det}(X_{1:(m-1),1:(m-1)})=(z_1-x^\T A^{-1}x)\mathrm{det}(A),\label{eq::formula2}\\
&\mathrm{det}(X_{2:m,2:m})=(z_2-y^\T A^{-1}y)\mathrm{det}(A),\label{eq::formula3}\\
&b=2\times (-1)^{m+1}\times \mathrm{det}\begin{pmatrix}
0 & -x^\T A^{-1}y\\
A & y \\
\end{pmatrix}=-2(x^\T A^{-1}y)\mathrm{det}(A). \label{eq::formula4}
\end{align}
Based on \eqref{eq::formula1} and \eqref{eq::formula2}--\eqref{eq::formula4}, 
\begin{equation*}
\begin{split}
b^2-4ac&=\left[4(x^\T A^{-1}y)^2+4\left\{(z_1-x^\T A^{-1}x)(z_2-y^\T A^{-1}y)-(x^\T A^{-1}y)^2\right\}\right]\mathrm{det}(A)^2\\
&=4\mathrm{det}(X_{1:(m-1),1:(m-1)})\mathrm{det}(X_{2:m,2:m}).
\end{split}
\end{equation*}

\paragraph{We then consider the case when $A$ is not invertible.} We consider $\tilde{X}=X+\epsilon I$ such that $\tilde{X}$ is invertible. We define $a(\epsilon),b(\epsilon),c(\epsilon)$ accordingly. Then, $a(\epsilon),b(\epsilon),c(\epsilon)$, $\mathrm{det}(\tilde{X}_{1:(m-1),1:(m-1)})$, $\mathrm{det}(\tilde{X}_{2:m,2:m})$ are continuous functions of $\epsilon$. Since we have 
$$b(\epsilon)^2-4a(\epsilon)c(\epsilon)=4\mathrm{det}(\tilde{X}_{1:(m-1),1:(m-1)})\mathrm{det}(\tilde{X}_{2:m,2:m})$$
for arbitrarily $\epsilon$ near 0, as $\epsilon\to 0$, we have 
$$b^2-4ac=4\mathrm{det}(X_{1:(m-1),1:(m-1)})\mathrm{det}(X_{2:m,2:m}).$$
\end{proof}

\begin{lemma}\label{lem::psd-suff1}
For $m\geq 3$, a symmetric $m\times m$ matrix $X$ is PSD if the following three conditions hold: 
\begin{enumerate}
\item[(i)] $X_{1:(m-1),1:(m-1)}$ and $X_{2:m,2:m}$ are PSD; 
\item[(ii)] $X_{2:(m-1),2:(m-1)}$ is PD; 
\item[(iii)] $\mathrm{det}(X)\geq 0$. 
\end{enumerate}
\end{lemma}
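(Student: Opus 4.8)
The plan is to use Lemma \ref{lem::det-quad} together with a continuity/perturbation argument. Write $\det(X) = a X_{1,m}^2 + b X_{1,m} + c$ with $a, b, c$ as in Lemma \ref{lem::det-quad}. Under condition (ii), $X_{2:(m-1),2:(m-1)}$ is PD, so $\det(X_{2:(m-1),2:(m-1)}) > 0$ and hence $a = -\det(X_{2:(m-1),2:(m-1)}) < 0$; the quadratic in $X_{1,m}$ opens downward. Condition (iii) says the value at the actual $X_{1,m}$ is nonnegative, so the discriminant $b^2 - 4ac \geq 0$, and by Lemma \ref{lem::det-quad} this reads $\det(X_{1:(m-1),1:(m-1)}) \det(X_{2:m,2:m}) \geq 0$. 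This is automatically consistent with condition (i), which gives both factors $\geq 0$; the real content is that (iii) forces $X_{1,m}$ to lie between the two roots of the downward parabola.

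The core idea is then: first handle the strictly positive case, then perturb. Suppose temporarily that $X_{1:(m-1),1:(m-1)}$ and $X_{2:m,2:m}$ are PD (not just PSD) and $\det(X) > 0$. I claim $X$ is PD. Indeed, by Sylvester's criterion (Theorem \ref{thm::syl}(i)), the PD-ness of $X_{1:(m-1),1:(m-1)}$ gives that all leading principal minors $\det(X_{1:b,1:b})$ for $b = 1, \dots, m-1$ are positive, and $\det(X_{1:m,1:m}) = \det(X) > 0$ by hypothesis; so all $m$ leading principal minors of $X$ are positive and $X$ is PD. (Condition (ii) and the PD-ness of $X_{2:m,2:m}$ are not even needed here — they are needed to make the limiting argument go through, as explained next.)

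For the general PSD case, I would perturb: replace $X$ by $X(\epsilon)$ obtained by adding $\epsilon$ to the $(1,1)$, the $(2,2), \dots, (m,m)$ entries — more precisely, pick a perturbation that makes $X_{1:(m-1),1:(m-1)}(\epsilon)$ and $X_{2:m,2:m}(\epsilon)$ PD and keeps $\det(X(\epsilon)) > 0$ for small $\epsilon > 0$, then let $\epsilon \to 0$. The subtlety, and the main obstacle, is engineering this perturbation so that all three properties hold simultaneously. Adding $\epsilon I$ to all of $X$ makes the two $(m-1)\times(m-1)$ principal submatrices PD (condition (i) makes them PSD, and adding $\epsilon I$ bumps eigenvalues up), but we must check $\det(X(\epsilon)) > 0$. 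Here condition (ii) is the lever: since $a(\epsilon) \to a = -\det(X_{2:(m-1),2:(m-1)}) < 0$ stays bounded away from $0$, and the discriminant $b(\epsilon)^2 - 4 a(\epsilon) c(\epsilon) = 4 \det(X_{1:(m-1),1:(m-1)}(\epsilon)) \det(X_{2:m,2:m}(\epsilon)) > 0$ for $\epsilon > 0$, the parabola $a(\epsilon) t^2 + b(\epsilon) t + c(\epsilon)$ has two distinct real roots $r_-(\epsilon) < r_+(\epsilon)$ depending continuously on $\epsilon$; since at $\epsilon = 0$ the original $X_{1,m}$ satisfies $a \cdot X_{1,m}^2 + b X_{1,m} + c = \det(X) \geq 0$, i.e. $X_{1,m} \in [r_-(0), r_+(0)]$, one can choose the perturbation's $(1,m)$ entry (or simply keep it fixed and argue $X_{1,m}$ stays strictly inside $(r_-(\epsilon), r_+(\epsilon))$ for small $\epsilon$, possibly after a further tiny adjustment) so that $\det(X(\epsilon)) > 0$. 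Then $X(\epsilon)$ is PD by the previous paragraph, and $X = \lim_{\epsilon \to 0} X(\epsilon)$ is PSD as a limit of PD matrices. I expect the bookkeeping of which entries to perturb, and verifying that the fixed $X_{1,m}$ indeed remains between the moving roots (using that the endpoints move continuously and the value $\det(X(\epsilon))$ at $X_{1,m}$ is close to $\det(X) \geq 0$), to be the fiddly part; everything else is a direct appeal to Lemma \ref{lem::det-quad} and Sylvester's criterion for PD matrices.
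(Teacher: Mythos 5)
Your proof is correct, but it takes a genuinely different route from the paper's. The paper argues directly via a Schur complement congruence: writing $A=X_{2:(m-1),2:(m-1)}$ (invertible by condition (ii)), it uses the elimination identities \eqref{eq::gauss1}--\eqref{eq::gauss3} to reduce $X$, by a congruence with unit-determinant matrices, to the block-diagonal matrix $\mathrm{diag}(A, S)$ where $S$ is the $2\times 2$ Schur complement with diagonal entries $z_1-x^{\T}A^{-1}x\geq 0$ and $z_2-y^{\T}A^{-1}y\geq 0$ (from condition (i)) and $\det(S)=\det(X)/\det(A)\geq 0$ (from conditions (ii)--(iii)); the $2\times 2$ case of Sylvester's criterion then gives that $S$, hence $X$, is PSD, with no limiting argument needed. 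You instead view $\det$ as the downward-opening quadratic in $X_{1,m}$ from Lemma \ref{lem::det-quad}, prove the strict statement (top-left $(m-1)\times(m-1)$ block PD plus $\det(X)>0$ implies $X$ PD) via leading principal minors, and then exhibit $X$ as a limit of such PD matrices by perturbing with $\epsilon I$ and re-centering the $(1,m)$ entry inside the moving root interval $(r_-(\epsilon),r_+(\epsilon))$. Your approach works --- the roots depend continuously on $\epsilon$ because $a(\epsilon)$ stays bounded away from $0$ by condition (ii), and $X_{1,m}\in[r_-(0),r_+(0)]$ by condition (iii), so a choice such as taking $t(\epsilon)$ to be the point of $[r_-(\epsilon)+\eta_\epsilon,\,r_+(\epsilon)-\eta_\epsilon]$ nearest to $X_{1,m}$, with $\eta_\epsilon=\min\{\epsilon,(r_+(\epsilon)-r_-(\epsilon))/4\}$, makes the ``tiny adjustment'' you allude to explicit and converges to $X_{1,m}$. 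The trade-off is that the paper's congruence argument is shorter and reuses machinery already set up in the proof of Lemma \ref{lem::det-quad}, while yours is more self-contained at the level of ideas (only the $2^{m}-1$-free PD half of Sylvester's criterion plus closedness of the PSD cone) but leaves the root-tracking bookkeeping to be written out carefully.
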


\begin{proof}[Proof of Lemma \ref{lem::psd-suff1}]
We follow the notation in the proof of Lemma \ref{lem::det-quad}, with $A=X_{2:(m-1),2:(m-1)}$, $x=X_{2:(m-1),1}$, $y=X_{2:(m-1),m}$, $z_1=X_{1,1}$, $z_2=X_{m,m}$. Since $X_{1:(m-1),1:(m-1)}$ and $X_{2:m,2:m}$ are PSD, by \eqref{eq::gauss2}--\eqref{eq::gauss3}, we have $z_1-x^\T A^{-1}x\geq 0$, $z_2-y^\T A^{-1}y\geq 0$. Moreover, $\mathrm{det}(X)\geq 0$ and $\mathrm{det}(A)>0$ imply that 
\begin{equation*}
\begin{pmatrix}
z_1-x^\T A^{-1}x  & X_{1,m}-x^\T A^{-1}y \\
X_{1,m}-x^\T A^{-1}y  & z_2-y^\T A^{-1}y
\end{pmatrix}
\end{equation*}
is PSD and thus
\begin{equation*}
\begin{pmatrix}
z_1-x^\T A^{-1}x & 0 & X_{1,m}-x^\T A^{-1}y \\
0 & A & 0 \\
X_{1,m}-x^\T A^{-1}y & 0 & z_2-y^\T A^{-1}y
\end{pmatrix},
\end{equation*}
is PSD. By \eqref{eq::gauss1}, we conclude that $X$ is PSD. 

\end{proof}

\begin{lemma}\label{lem::inner-saturated}
For a symmetric $m\times m$ matrix $X$, assume $X_{1:(m-1),1:(m-1)}$ and $X_{2:m,2:m}$ are PSD and $X_{2:(m-1),2:(m-1)}$ is not invertible. Then, we have the following results: 

(i) $\mathrm{det}(X)=0$. 

(ii) Suppose $Y_1,Y_2$ are possibly different inner-saturated submatrices of $X$. Write $\mathrm{det}(Y_1)$ and $\mathrm{det}(Y_2)$ as quadratic functions of $X_{1,m}$, given all other elements in $Y_1$ and $Y_2$. In particular, $\mathrm{det}(Y_1)=a_1X_{1,m}^2+b_1X_{1,m}+c_1$ and $\mathrm{det}(Y_2)=a_2X_{1,m}^2+b_2X_{1,m}+c_2$. Then, there exists a positive constant $k$ such that $(a_1,b_1,c_1)=k(a_2,b_2,c_2)$. 
\end{lemma}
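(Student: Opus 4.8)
The plan is to run everything through the block decomposition already used in the proof of Lemma~\ref{lem::det-quad}. Write $A = X_{2:(m-1),2:(m-1)}$, $x = X_{2:(m-1),1}$, $y = X_{2:(m-1),m}$, and set $r = \mathrm{rank}(A)$; since $X_{1:(m-1),1:(m-1)}$ is PSD so is its principal submatrix $A$, and by hypothesis $r\le m-3$. The single fact powering both parts is that $x,y\in\mathrm{col}(A)$: for any $v\in\ker(A)$ we have $v^{\T}Av=0$ because $A$ is PSD, hence $(0,v^{\T})^{\T}$ lies in the kernel of the PSD matrix $X_{1:(m-1),1:(m-1)}$ (a PSD matrix kills every vector on which its quadratic form vanishes), and the first coordinate of $X_{1:(m-1),1:(m-1)}(0,v^{\T})^{\T}=0$ reads $x^{\T}v=0$; thus $x\perp\ker(A)$, i.e.\ $x\in\mathrm{col}(A)$, and symmetrically, using that $X_{2:m,2:m}$ is PSD, $y\in\mathrm{col}(A)$. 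Part~(i) then follows at once: choose $v\in\ker(A)$ with $v\ne 0$ (possible since $A$ is singular); then $\tilde v = (0,v^{\T},0)^{\T}\in\mathbb{R}^m$ is a nonzero null vector of $X$, because the three blocks of $X\tilde v$ are $x^{\T}v=0$, $Av=0$, $y^{\T}v=0$, so $\mathrm{det}(X)=0$.

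For part~(ii), note first that if $r=0$ then $A=0$, forcing $J=\emptyset$, so $X$ has a unique inner-saturated submatrix and the statement holds trivially with $k=1$. Assume $r\ge 1$ and let $J\subseteq\{2,\dots,m-1\}$, $|J|=r$, be admissible, i.e.\ $\{X_{2:(m-1),j}:j\in J\}$ is a basis of $\mathrm{col}(A)$, giving the inner-saturated submatrix $Y=X_{I,I}$ with $I=\{1\}\cup J\cup\{m\}$. Two observations: (a) $A_{JJ}=X_{J,J}$ is positive definite --- it is PSD as a principal submatrix of $A$, and $A_{JJ}w=0$ forces $A\hat w=0$ for the zero-extension $\hat w$ of $w$, i.e.\ a vanishing combination of the independent columns $\{X_{2:(m-1),j}\}_{j\in J}$, so $w=0$; and (b) writing $x=X_{2:(m-1),J}\,\gamma$ (possible since $x\in\mathrm{col}(A)=\mathrm{span}\{X_{2:(m-1),j}\}_{j\in J}$) and restricting to the rows indexed by $J$ gives $\gamma=A_{JJ}^{-1}x_J$, hence $x=X_{2:(m-1),J}\,A_{JJ}^{-1}x_J$ and likewise $y=X_{2:(m-1),J}\,A_{JJ}^{-1}y_J$, where $x_J=X_{J,1}$, $y_J=X_{J,m}$.

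From (b) a short computation shows that the scalars $x_J^{\T}A_{JJ}^{-1}x_J$, $y_J^{\T}A_{JJ}^{-1}y_J$, $x_J^{\T}A_{JJ}^{-1}y_J$ do not depend on which admissible $J$ is chosen --- indeed they equal $x^{\T}A^{+}x$, $y^{\T}A^{+}y$, $x^{\T}A^{+}y$ with $A^{+}$ the Moore--Penrose pseudoinverse; concretely, for two admissible sets $J_1,J_2$ one rewrites $x_{J_1}^{\T}A_{J_1J_1}^{-1}y_{J_1}=\gamma_1^{\T}y_{J_1}$ with $\gamma_1=A_{J_1J_1}^{-1}x_{J_1}$ and $\gamma_1^{\T}X_{2:(m-1),J_1}^{\T}=x^{\T}$, and substitutes $y_{J_1}=(X_{2:(m-1),J_2}A_{J_2J_2}^{-1}y_{J_2})_{J_1}$ to land on $x_{J_2}^{\T}A_{J_2J_2}^{-1}y_{J_2}$. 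Now apply the computations in the proof of Lemma~\ref{lem::det-quad} to $Y$, whose middle block $A_{JJ}$ is invertible by~(a): the formula for $a$ in Lemma~\ref{lem::det-quad} together with equations~\eqref{eq::formula1} and~\eqref{eq::formula4}, with $(A,x,y)$ there replaced by $(A_{JJ},x_J,y_J)$, give
\[ a_Y = -\mathrm{det}(A_{JJ}),\qquad b_Y = -2\,(x_J^{\T}A_{JJ}^{-1}y_J)\,\mathrm{det}(A_{JJ}), \]
\[ c_Y = \bigl[(X_{1,1}-x_J^{\T}A_{JJ}^{-1}x_J)(X_{m,m}-y_J^{\T}A_{JJ}^{-1}y_J) - (x_J^{\T}A_{JJ}^{-1}y_J)^2\bigr]\mathrm{det}(A_{JJ}). \]
Since the parenthesized factors are $J$-independent, $(a_Y,b_Y,c_Y)=\mathrm{det}(A_{JJ})\cdot u$ for a fixed vector $u$; hence for inner-saturated submatrices $Y_1,Y_2$ with index sets $J_1,J_2$ we get $(a_1,b_1,c_1)=k\,(a_2,b_2,c_2)$ with $k=\mathrm{det}(A_{J_1J_1})/\mathrm{det}(A_{J_2J_2})>0$, because $A_{J_1J_1}$ and $A_{J_2J_2}$ are positive definite.

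The block-matrix bookkeeping is routine; the real content --- and the step I expect to be the main obstacle to state cleanly --- is the combination of (b) with the basis-independence of the Schur-complement quantities $x_J^{\T}A_{JJ}^{-1}y_J$ and its two analogues: the point is that these depend only on $A,x,y$, not on which maximal linearly independent column set is picked, and it is precisely this intrinsic character that upgrades ``each inner-saturated minor is some quadratic in $X_{1,m}$'' to ``all these quadratics are positive multiples of one another.''
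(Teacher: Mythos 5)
Your proof is correct, and it takes a genuinely different route from the paper's, most notably in part (ii). For part (i), the paper performs a block Gaussian elimination against an invertible block $A_1$ of $A$ with $\mathrm{rank}(A_1)=\mathrm{rank}(A)$ and reads off from PSD-ness of the two overlapping submatrices that the eliminated off-diagonal blocks vanish; you instead exhibit an explicit null vector $(0,v^{\T},0)^{\T}$ of $X$ built from $v\in\ker(A)$, using the standard fact that a PSD matrix annihilates any vector on which its quadratic form vanishes to get $x,y\perp\ker(A)$. Both arguments rest on the same structural fact ($x,y\in\mathrm{col}(A)$), but yours is shorter and avoids the block bookkeeping. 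For part (ii), the paper perturbs to $X+\epsilon I$, expresses the Schur-complement quantities $W_1(\epsilon),W_2(\epsilon),W_3(\epsilon)$ as ratios of determinants that manifestly depend only on $(A,x,y,z_1,z_2,X_{1,m})$, and passes to the limit $\epsilon\to 0$ (with a slightly delicate aside about recovering the sign of $W_2$). You instead prove directly that $x_J^{\T}A_{JJ}^{-1}x_J$, $y_J^{\T}A_{JJ}^{-1}y_J$, $x_J^{\T}A_{JJ}^{-1}y_J$ are independent of the admissible index set $J$ --- they are the pseudo-inverse quadratic forms $x^{\T}A^{+}x$, etc.\ --- and then read the proportionality $(a_Y,b_Y,c_Y)=\mathrm{det}(A_{JJ})\cdot u$ straight off the formulas of Lemma~\ref{lem::det-quad} applied to $Y$. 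This is more elementary (no limits, no continuity argument), makes the constant $k=\mathrm{det}(A_{J_1J_1})/\mathrm{det}(A_{J_2J_2})>0$ explicit, and sidesteps the sign-of-$W_2$ issue entirely; the paper's perturbation argument, on the other hand, recycles machinery it already set up for the degenerate case of Lemma~\ref{lem::det-quad}. Your treatment of the edge case $r=0$ and your verification that $A_{JJ}$ is positive definite are both correct, so I see no gap.
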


\begin{proof}[Proof of Lemma \ref{lem::inner-saturated}]
We follow the notation in the proof of Lemma \ref{lem::det-quad}, with $A=X_{2:(m-1),2:(m-1)}$, $x=X_{2:(m-1),1}$, $y=X_{2:(m-1),m}$, $z_1=X_{1,1}$, $z_2=X_{m,m}$. Write 
\begin{equation*}Y_1=
\begin{pmatrix}
z_1 & x_1^\T & X_{1,m} \\
x_1 & A_1 & y_1 \\
X_{1,m} & y_1^\T  & z_2
\end{pmatrix},
\end{equation*}
where $A_1$ is an invertible matrix block in $A$ satisfying $\mathrm{rank}(A_1)=\mathrm{rank}(A)$. To show (ii), we only need to show that $\mathrm{det}(Y_1)/\mathrm{det}(A_1)$ only depends on $A,x,y,z_1,z_2,X_{1,m}$, and does not depend on $A_1,x_1,y_1$. Without loss of generality, we write
\begin{equation*}X=
\begin{pmatrix}
z_1 & x_1^\T & x_2^\T & X_{1,m} \\
x_1 & A_1 & A_2 & y_1 \\
x_2 & A_2^\T & A_3 & y_2 \\
X_{1,m} & y_1^\T & y_2^\T & z_2
\end{pmatrix}. 
\end{equation*}

\paragraph{Proof of Lemma \ref{lem::inner-saturated}(i).}

Using Gaussian elimination, we can multiply on the left and right by matrices with determinants $1$, yielding
\begin{equation*}
\mathrm{det}(X)=\mathrm{det}
\begin{pmatrix}
z_1-x_1^\T A_1^{-1}x_1 & 0 & x_2^\T-x_1^\T A_1^{-1}A_2 & X_{1,m}-X_1^\T A_1^{-1}y_1 \\
0 & A_1 & 0 & 0 \\
x_2-A_2^\T A^{-1}x_1 & 0 & A_3-A_2^\T A_1^{-1}A_2 & y_2-A_2^\T A_1^{-1}y_1 \\
X_{1,m}-X_1^\T A_1^{-1}y_1 & 0 & y_2^\T-y_1^\T A_1^{-1}A_2 & z_2-y_1^\T A_1^{-1}y_1
\end{pmatrix}.
\end{equation*}
Moreover, $A_3-A_2^\T A_1^{-1}A_2=0$ must hold because $\mathrm{rank}(A_1)=\mathrm{rank}(A)$. Also, the condition that both $X_{1:(m-1),1:(m-1)}$ and $X_{2:m,2:m}$ are PSD implies that the following two matrices 
\begin{equation*}
\begin{pmatrix}
z_1-x_1^\T A_1^{-1}x_1 & 0 & x_2^\T-x_1^\T A_1^{-1}A_2  \\
0 & A_1 & 0  \\
x_2-A_2^\T A^{-1}x_1 & 0 & A_3-A_2^\T A_1^{-1}A_2  \\
\end{pmatrix},\quad \begin{pmatrix}
A_1 & 0 & 0 \\
0 & A_3-A_2^\T A_1^{-1}A_2 & y_2-A_2^\T A_1^{-1}y_1 \\
0 & y_2^\T-y_1^\T A_1^{-1}A_2 & z_2-y_1^\T A_1^{-1}y_1
\end{pmatrix}.
\end{equation*}
are PSD. Therefore, we have $x_2-A_2^{\T}A_1^{-1}x_1=0$ and $y_2-A_2^\T A_1^{-1}y_1=0$, which implies $\mathrm{det}(X)=0$ as stated in part (i). 

\paragraph{Proof of Lemma \ref{lem::inner-saturated}(ii).}
Consider $\tilde{X}=X+\epsilon I$ for $\epsilon>0$. Next, we apply Gaussian elimination for $X+\epsilon I$ to obtain the matrix
\begin{equation*}Y(\epsilon)=
\begin{pmatrix}
W_1(\epsilon) & 0 & 0 & W_2(\epsilon) \\
0 & \tilde{A}_1(\epsilon) & 0 & 0 \\
0 & 0 & \tilde{A}_3(\epsilon)  & 0 \\
W_2(\epsilon) & 0 & 0 & W_3(\epsilon)
\end{pmatrix},
\end{equation*}
where 
\begin{equation*}
\begin{split}
\tilde{A}_1(\epsilon)&=A_1+\epsilon I,\\
\tilde{A}_3(\epsilon)&=A_3+\epsilon I-A_2^\T\tilde{A}_1(\epsilon)^{-1}A_2,\\
W_1(\epsilon)&=z_1+\epsilon-x_1^\T\tilde{A}_1^{-1}(\epsilon)x_1-\tilde{x}_2^\T (\epsilon)\tilde{A}_3^{-1}(\epsilon)\tilde{x}_2(\epsilon),\\
W_2(\epsilon)&=X_{1,m}-x_1^\T\tilde{A}_1^{-1}(\epsilon)y_1-\tilde{x}_2^\T (\epsilon)\tilde{A}_3^{-1}(\epsilon)\tilde{y}_2(\epsilon),\\
W_3(\epsilon)&=z_2+\epsilon-y_1^\T\tilde{A}_1^{-1}(\epsilon)y_1-\tilde{y}_2^\T(\epsilon)\tilde{A}_3^{-1}(\epsilon)\tilde{y}_2(\epsilon),\\
\end{split}
\end{equation*}
and $\tilde{x}_2(\epsilon)=x_2-A_2^{\T}\tilde{A}_1(\epsilon)^{-1}x_1$, $\tilde{y}_2(\epsilon)=y_2-A_2^{\T}\tilde{A}_1(\epsilon)^{-1}y_1$. Here we consider an $\epsilon>0$ to ensure that $\tilde{A}_3(\epsilon)$ is invertible. Our proof consists of the following two steps. The first step is to show that $W_1(\epsilon),W_2(\epsilon),W_3(\epsilon)$ only depends on $A,x,y,z_1,z_2,X_{1,m}$. The second step is to show that $\mathrm{det}(Y_1)/\mathrm{det}(A_1)$ only depends on $X_{1,m}$ and the limits of $W_1(\epsilon),W_2(\epsilon),W_3(\epsilon)$, as $\epsilon\to0$. Combining the two steps, we finish showing that $\mathrm{det}(Y_1)/\mathrm{det}(A_1)$ only depends on $A,x,y,z_1,z_2,X_{1,m}$, and thus finish the proof of part (ii). 

\textbf{Step 1}. By Gaussian elimination, we have $\mathrm{det}(X+\epsilon I)=\mathrm{det}(Y(\epsilon))$ and
\begin{equation*}
\mathrm{det}(A+\epsilon I)=\mathrm{det}\begin{pmatrix}
\tilde{A}_1(\epsilon) & 0 \\
0 & \tilde{A}_3(\epsilon)  \\
\end{pmatrix},
\end{equation*}
\begin{equation*}
\mathrm{det}\begin{pmatrix}
z_1+\epsilon & x^\T \\
x & A+\epsilon I 
\end{pmatrix}=\mathrm{det}\begin{pmatrix}
W_1(\epsilon) & 0 & 0  \\
0 & \tilde{A}_1(\epsilon) & 0 \\
0 & 0 & \tilde{A}_3(\epsilon)  \\
\end{pmatrix},
\end{equation*}
\begin{equation*}
\mathrm{det}\begin{pmatrix}
A+\epsilon I & y \\
y^\T & z_2+\epsilon
\end{pmatrix}=\mathrm{det}\begin{pmatrix}
\tilde{A}_1(\epsilon) & 0 & 0 \\
0 & \tilde{A}_3(\epsilon)  & 0 \\
0 & 0 & W_3(\epsilon)
\end{pmatrix}.
\end{equation*}
The above three displayed identities imply
\begin{equation*}
W_1(\epsilon)=\mathrm{det}\begin{pmatrix}
z_1+\epsilon & x^\T \\
x & A+\epsilon I 
\end{pmatrix}\bigg/\mathrm{det}(A+\epsilon I)
\end{equation*}
\begin{equation*}
W_3(\epsilon)=\mathrm{det}\begin{pmatrix}
A+\epsilon I & y \\
y^\T & z_2+\epsilon
\end{pmatrix}\bigg/\mathrm{det}(A+\epsilon I),
\end{equation*}
and 
\begin{equation*}
\mathrm{det}\begin{pmatrix}
W_1(\epsilon) & W_2(\epsilon) \\
W_2(\epsilon) & W_3(\epsilon) \\
\end{pmatrix}=\mathrm{det}(X+\epsilon I)/\mathrm{det}(A+\epsilon I)
\end{equation*}
only depend on $A,x,y,z_1,z_2,X_{1,m}$. Then, $|W_2(\epsilon)|$ only depend on $A,x,y,z_1,z_2,X_{1,m}$. The sign of $W_2(\epsilon)$ can be determined by looking at the change of $\mathrm{det}(X+\epsilon I)/\mathrm{det}(A+\epsilon I)$ with $X_{1,m}$ varying in a neighborhood. Therefore, $W_1(\epsilon),W_2(\epsilon),W_3(\epsilon)$ only depends on $A,x,y,z_1,z_2,X_{1,m}$. 

\textbf{Step 2.} As $\epsilon\to 0$, we can verify that $\tilde{A}_3^{-1}(\epsilon)/\epsilon\to A_2^{\T}A_1^{-2}A_2+I$, $\tilde{x}_2/\epsilon\to A_2^{\T} A_1^{-2}x_1$, $\tilde{y}_2/\epsilon\to A_2^{\T} A_1^{-2}y_1$ and thus
\begin{equation*}
\begin{split}
&W_1(\epsilon)\to z_1-x_1^\T A_1^{-1}x_1,\\
&W_2(\epsilon)\to X_{1,m}-x_1^\T A_1^{-1}y_1,\\
&W_3(\epsilon)\to z_2-y_1^\T A_1^{-1}y_1.\\
\end{split}
\end{equation*}
By the proof of Lemma \ref{lem::det-quad}, $\mathrm{det}(Y_1)/\mathrm{det}(A_1)$ only depends on $z_1-x_1^\T A_1^{-1}x_1$, $z_2-y_1^\T A_1^{-1}y_1$, $x_1^\T A_1^{-1}y_1$, $X_{1,m}$. Therefore, $\mathrm{det}(Y_1)/\mathrm{det}(A_1)$ only depends on $X_{1,m}$ and the limits of $W_1(\epsilon)$, $W_2(\epsilon)$, $W_3(\epsilon)$, as $\epsilon\to0$.

\end{proof}

\subsection{Proof of Theorem \ref{thm::psd2} and Propositions \ref{prop::pd}--\ref{prop::psd}}
\begin{proof}[Proof of Theorem \ref{thm::psd2}]
Since any principal submatrix of a PSD matrix is a PSD matrix and thus has a nonnegative determinant, we have that (i) implies (ii) and (iii). It is straightforward that (iii) implies (ii). We only need to show that (ii) implies (i). Next, we show (ii) implies (i) by mathematical induction. 

When $m=2$, the result is correct by classic Sylvester's criterion in Theorem \ref{thm::syl}. Suppose that (ii) implies (i) when $m=m_0$. When (ii) holds for $m=m_0+1$, by induction, $X_{1:m_0,1:m_0}$ and $X_{2:(m_0+1),2:(m_0+1)}$ are PSD. We discuss the following two cases: 

\begin{enumerate}
\item \textbf{Case 1}: $X_{2:m_0,2:m_0}$ is invertible. This implies that $X_{2:m_0,2:m_0}$ is PD and the inner-saturated submatrix of $X$ is $X$ itself. By Lemma \ref{lem::psd-suff1}, $X$ is PSD.  

\item \textbf{Case 2}: $X_{2:m_0,2:m_0}$ is not invertible. Suppose $X_{I,I}$ is one inner-saturated submatrix of $X$ satisfying $\mathrm{det}(X_{I,I})\geq 0$, where $I=\{1,J,m_0+1\}$ and $J$ satisfies that $\{X_{2:m_0,j}:j\in J\}$ is a maximal linearly independent set of the column vectors in $X_{2:m_0,2:m_0}$. This implies that $X_{J,J}$ is PSD. By Lemma \ref{lem::psd-suff1}, $X_{I,I}$ is PSD. Next, we show $\mathrm{det}(X_{I',I'})\geq 0$ for any $I'\subseteq\{1,2,...,m_0+1\}$, and thus $X$ is PSD by classic Sylvester's criterion in Theorem \ref{thm::syl}. We discuss the following two subcases: 
\begin{itemize}
\item \textbf{Subcase 1}: $1\not\in I'$ or $(m_0+1)\not\in I'$. Since $X_{I',I'}$ is a principal submatrix of PSD matrix $X_{1:m_0,1:m_0}$ or $X_{2:(m_0+1),2:(m_0+1)}$, we have $\mathrm{det}(X_{I',I'})\geq 0$. 
\item \textbf{Subcase 2}: both $1\in I'$ and $(m_0+1)\in I'$. Let $J'=I'-\{1,m_0+1\}$. There are two possibilities: 
\begin{itemize}
\item If $X_{J',J'}$ is not invertible, then $\mathrm{det}(X_{I',I'})=0$ by Lemma \ref{lem::inner-saturated}. 
\item If $X_{J',J'}$ is invertible, then there must exist an index set $J''$ satisfying that $J'\subseteq J''$ and $\{X_{2:m_0,j}:j\in J''\}$ is a maximal linearly independent set of the column vectors in $X_{2:m_0,2:m_0}$. Let $I''=\{1,J'',m_0+1\}$. By Lemma \ref{lem::inner-saturated}, we have $\mathrm{det}(X_{I'',I''})\geq 0$, Moreover, by Lemma \ref{lem::psd-suff1}, $X_{I'',I''}$ is PSD, which further implies that $\mathrm{det}(X_{I',I'})\geq 0$. 
\end{itemize}
\end{itemize}
\end{enumerate}
Therefore, (ii) implies (i) when $m=m_0+1$. We finish the proof by mathematical induction. 
\end{proof}

\begin{proof}[Proof of Proposition \ref{prop::pd}]
For part (i), if $X$ is PD, then $X$ has a positive determinant by Theorem \ref{thm::syl}(1). Moreover, if $X_{1:(m-1),1:(m-1)}$ is PD and $X$ has a positive determinant, then all of leading principal minors of $X$ are positive, and thus $X$ is PD by Theorem \ref{thm::syl}(1). Therefore, $X$ is PD if and only if $X$ has a positive determinant. 

For part (ii), $\mathrm{det}(X)$ is a quadratic function of $X_{1,m}$ given all other elements in $X$. In particular, $\mathrm{det}(X)=aX_{1,m}^2+bX_{1,m}+c$ where $a,b,c$ satisfy $a<0$ and $b^2-4ac>0$, by Lemma \ref{lem::det-quad}. Therefore, there exists a real number $k$ such that we can set $X_{1,m}=X_{m,1}=k$ to ensure $X$ is PD. 

\end{proof}

\begin{proof}[Proof of Proposition \ref{prop::psd}]
For part (i), if $X$ is PSD, then one of the inner-saturated submatrices of $X$ has a nonnegative determinant by Theorem \ref{thm::psd2}. Moreover, if both $X_{1:(m-1),1:(m-1)}$ and $X_{2:m,2:m}$ are PD and one of the inner-saturated submatrices of $X$ has a nonnegative determinant, then Theorem \ref{thm::psd2}(ii) holds, and thus $X$ is PSD by Theorem \ref{thm::psd2}. Therefore, $X$ is PSD if and only if one of the inner-saturated submatrices of $X$ has a nonnegative determinant. 

For part (ii), we focus on one specific inner-saturated submatrix of $X$, denoted as $Y$. Then, $\mathrm{det}(Y)$ is a quadratic function of $X_{1,m}$ given all other elements in $Y$. In particular, $\mathrm{det}(Y)=aX_{1,m}^2+bX_{1,m}+c$ where $a,b,c$ satisfy $a<0$ and $b^2-4ac\geq 0$, by Lemma \ref{lem::det-quad}. Therefore, there exists a real number $k$ such that we can set $X_{1,m}=X_{m,1}=k$ to ensure $X$ is PSD. 

\end{proof}

\bibliographystyle{Chicago}

\bibliography{sylvester}

\end{document}